\theoremstyle{plain}
\newtheorem{thm}{Theorem}[section]
\newtheorem{pro}[thm]{Proposition}
\newtheorem{lem}[thm]{Lemma}
\newtheorem{cor}[thm]{Corollary}
\newcommand{\Z}{\mathbb{Z}}
\newcommand{\N}{\mathbb{N}}
\DeclareMathOperator{\PAut}{PAut}
\DeclareMathOperator{\Inn}{Inn}
\DeclareMathOperator{\Aut}{Aut}
\def\l{\langle}
\def\r{\rangle}
\begin{document}

\title[A finiteness condition on centralizers]%
{A finiteness condition on centralizers\\ in locally nilpotent groups}

\author[G.A. Fern\'andez-Alcober]{Gustavo A. Fern\'andez-Alcober}
\address{Matematika Saila\\ Euskal Herriko Unibertsitatea UPV/EHU\\
48080 Bilbao, Spain. {\it E-mail address}: {\tt gustavo.fernandez@ehu.eus}}

\author[L. Legarreta]{Leire Legarreta}
\address{Matematika Saila\\ Euskal Herriko Unibertsitatea UPV/EHU\\
48080 Bilbao, Spain. {\it E-mail address}: {\tt leire.legarreta@ehu.eus}}
%\email{leire.legarreta@ehu.es}

\author[A. Tortora]{Antonio Tortora}
\address{Dipartimento di Matematica\\ Universit\`a di Salerno\\
Via Giovanni Paolo II, 132\\ 84084 Fisciano (SA)\\ Italy. {\it E-mail address}: {\tt antortora@unisa.it}}
%\email{antortora@unisa.it}

\author[M. Tota]{Maria Tota}
\address{Dipartimento di Matematica\\ Universit\`a di Salerno\\
Via Giovanni Paolo II, 132\\ 84084 Fisciano (SA)\\ Italy. {\it E-mail address}: {\tt mtota@unisa.it}}
%\email{mtota@unisa.it}

\thanks{The first two authors are supported by the Spanish Government, grants
MTM2011-28229-C02-02 and MTM2014-53810-C2-2-P, and by the Basque Government, grant IT753-13. The last two authors would like to thank the Department of Mathematics at the University of the Basque Country for its excellent hospitality while part of this paper was being written; they also wish to thank G.N.S.A.G.A. (INdAM) for financial support.}

\keywords{Centralizers, locally nilpotent groups\vspace{3pt}}
\subjclass[2010]{20F19, 20E34}

\begin{abstract}
We give a detailed description of infinite locally nilpotent groups $G$ such that the index $|C_G(x):\l x\r|$ is finite, for every $\langle x \rangle \ntriangleleft G$. We are also able to extend our analysis to all non-periodic groups satisfying a variation of our condition, where the requirement of finiteness is replaced with a bound.
\end{abstract}

\maketitle

\section{Introduction}

In this paper we continue our study, introduced in \cite{FLTT3}, of groups with a finiteness condition on centralizers of elements, which was in turn inspired by the results in \cite{FLTT,FLTT2} and \cite{zha-gao,zha-guo}. Following \cite{FLTT3}, we say that a group $G$ is an FCI-group provided that
\begin{equation}\label{FCI}
|C_G(x):\langle x \rangle| < \infty
\quad
\text{for every $\langle x \rangle \ntriangleleft G$,}
\end{equation}
and if there exists $n$ such that
\begin{equation}\label{BCI}
|C_G(x):\langle x \rangle| \le n
\quad
\text{for every $\langle x \rangle \ntriangleleft G$,}
\end{equation}
then we say that $G$ is a BCI-group. It is easy to see that free groups satisfy the FCI-condition for every non-trivial element, and it is also known that the same happens with hyperbolic torsion-free groups \cite{Gr}. Apart from the abelian case, such groups are examples of FCI-groups which are not BCI-groups (Lemma \ref{aperiodic}). Of course Dedekind groups vacuously satisfy the BCI-condition. Further examples of BCI-groups are the generalized dihedral groups \cite[Example 2.1]{FLTT3}, and the Tarski monster groups (i.e. infinite simple $p$-groups, for $p$ a prime, all of whose proper non-trivial subgroups are of order $p$). The classes of FCI- and BCI-groups are closed under taking subgroups but, since not all groups are FCI-groups, they are not usually closed under taking quotients. However, the quotient of an FCI-group by a finite normal subgroup is again an FCI-group, and similarly for a BCI-group \cite[Proposition 2.2]{FLTT3}.

\vspace{8pt}

For a periodic group $G$, conditions (\ref{FCI}) and (\ref{BCI}) imply that $|C_G(x)|<\infty$ for every $\langle x \rangle \ntriangleleft G$. Recall that there are well-known results giving information about the structure of an infinite group simply from the knowledge that the centralizer of one element is finite
(see \cite[page 263]{Sh}, and \cite{Shu} for an account on the topic). In our context, by imposing conditions on a broad set of centralizers, the results are much more precise. In \cite{FLTT3} we dealt with infinite locally finite FCI-groups, and with periodic BCI-groups. More precisely, we showed that every infinite locally finite FCI-group is a BCI-group, and it is either a Dedekind group, or an extension of a certain infinite periodic Dedekind group $D$ by an appropriate power automorphism of $D$ of finite order. We also proved that locally graded periodic BCI-groups are locally finite, where the restriction to locally graded groups is imposed to avoid the presence of Tarski monster groups.

\vspace{8pt}

Our goal in the present paper is to extend our earlier work to infinite locally nilpotent FCI-groups, and to all non-periodic BCI-groups. As in the realm of locally finite groups, we are able to give a detailed description of such groups, which are again certain cyclic extensions of Dedekind groups. Actually, in the case of periodic locally nilpotent groups (Theorem \ref{periodic locally nilpotent}), we can get an easier description than the characterization of infinite locally finite FCI-groups \cite[Theorem 4.4]{FLTT3}. Furthermore, as it turns out, there is a huge difference between non-periodic locally nilpotent FCI-groups and BCI-groups (compare Theorem \ref{non-periodic locally nilpotent} with Corollary \ref{non-periodic locally nilpotent BCI}). Observe also that, since free groups are FCI-groups, one cannot expect detailed results about the structure of non-periodic FCI-groups in general.

\vspace{8pt}

\noindent
\textit{Notation\/}.
We use mostly standard notation in group theory. In particular, if $G$ is a finite $p$-group and $k\geq 1$, we denote by $\Omega_k(G)$ the subgroup generated by all elements of $G$ of order at most $p^k$. If $G$ is a periodic group, we write $\pi(G)$ for the set of prime divisors of the orders of the elements of $G$; if $G$ is also nilpotent and $\varphi$ is an automorphism of $G$, we denote by $G_p$ the unique Sylow $p$-subgroup of $G$, and by $\varphi_p$ the restriction of $\varphi$ to $G_p$. Finally, $R^{\times}$ stands for the group of units of a ring $R$, and $\Z_p$ for the ring of $p$-adic integers.

\section{Power automorphisms}

In this section we collect some preliminary results on power automorphisms of a group that will be needed in the sequel. Recall that an automorphism of a group $G$ is said to be a \emph{power automorphism\/} if it sends every element $x\in G$ to a power of $x$. Power automorphisms form an abelian subgroup of $\Aut G$, which we denote by $\PAut G$.

The following lemma can be easily checked.

\begin{lem}
\label{PAut Q8}
An automorphism of the quaternion group is a power automorphism if and only if it is inner.
\end{lem}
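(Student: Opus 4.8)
The plan is to compare the two abelian subgroups $\PAut(Q_8)$ and $\Inn(Q_8)$ of $\Aut(Q_8)$ directly, showing first that $\Inn(Q_8)\subseteq\PAut(Q_8)$ and then that this containment is forced to be an equality by a count on orders. I write $Q_8=\{\pm1,\pm i,\pm j,\pm k\}$ with the usual relations $i^2=j^2=k^2=-1$ and $ij=k$, so that $i$ and $j$ generate $Q_8$, the centre is $Z(Q_8)=\{\pm1\}$, and $-1$ is the unique element of order $2$. Throughout I use that any automorphism preserves element orders (hence fixes $-1$) and that $Q_8$ is a Dedekind group, i.e.\ every subgroup is normal.

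For the easy inclusion I would argue that every inner automorphism is a power automorphism. Indeed, fix $g\in Q_8$ and $x\in Q_8$; since $\l x\r\triangleleft Q_8$ by the Dedekind property, the conjugate $x^g$ lies in $\l x\r$ and is therefore a power of $x$. As $x$ was arbitrary, conjugation by $g$ is a power automorphism, giving $\Inn(Q_8)\subseteq\PAut(Q_8)$.

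For the reverse inclusion I would bound $|\PAut(Q_8)|$. A power automorphism $\varphi$ is determined by its values on the generators $i$ and $j$. Each of these has order $4$, and the only powers of $i$ (resp.\ $j$) of order $4$ are $i^{\pm1}$ (resp.\ $j^{\pm1}$); since $\varphi$ preserves orders, there are at most $2\cdot2=4$ possibilities for the pair $(\varphi(i),\varphi(j))$, whence $|\PAut(Q_8)|\le4$. On the other hand $\Inn(Q_8)\cong Q_8/Z(Q_8)$ has order $4$, and it is contained in $\PAut(Q_8)$ by the previous step. Comparing orders forces $\PAut(Q_8)=\Inn(Q_8)$, which is exactly the assertion.

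There is no serious obstacle here; the only point requiring a little care is the upper bound $|\PAut(Q_8)|\le 4$, where one must ensure that a power automorphism cannot send a generator to a power of the wrong order — this is handled by the order-preservation remark. Note that the argument deliberately avoids checking directly that all four sign choices yield genuine automorphisms: the containment $\Inn(Q_8)\subseteq\PAut(Q_8)$ together with $|\Inn(Q_8)|=4$ already supplies the matching lower bound, so the equality drops out by counting alone.
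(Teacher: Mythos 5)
Your proof is correct. The paper offers no argument for this lemma at all (it is dismissed as "easily checked"), so there is no official proof to compare against; your two-step argument --- the Dedekind property of $Q_8$ gives $\Inn Q_8 \subseteq \PAut Q_8$, and order-preservation on the generators $i,j$ gives $|\PAut Q_8|\le 4=|\Inn Q_8|$ --- is a complete and efficient verification, and the counting trick legitimately spares you from checking by hand that each of the four sign patterns extends to an automorphism.
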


The next result is due to Robinson (see \cite[Lemma 4.1.2]{Ro3}).

\begin{thm}
\label{PAut A robinson}
Let $A$ be an abelian group. Then the following hold:
\begin{enumerate}
\item
If $A$ is not periodic then $\PAut A$ is of order $2$, generated by the inversion automorphism.
\item
If $A$ is periodic then $\PAut A$ is isomorphic with the cartesian product of the groups $\PAut A_p$, where $p$ runs over $\pi(A)$. Furthermore, if $A$ is a $p$-group for a prime $p$, then $\PAut A \cong R^{\times}$, where $R=\Z_p$ if $\exp A=\infty$ and $R=\Z/p^n\Z$ if $\exp A=p^n<\infty$.
In both cases, an isomorphism can be obtained by sending every $t\in R^{\times}$ to the automorphism given by $a\mapsto a^t$ for all $a\in A$.
\end{enumerate}
\end{thm}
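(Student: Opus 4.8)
The plan is to use throughout the fact, recorded just above the statement, that $\PAut A$ is a group, from which I first extract the local behaviour of a power automorphism $\varphi$. Since $\varphi$ and $\varphi^{-1}$ both send $\langle x\rangle$ into itself, each cyclic subgroup is actually $\varphi$-invariant, so $\varphi$ restricts to an automorphism of $\langle x\rangle$. Consequently, if $x$ has infinite order then $\varphi(x)=x^{\pm1}$, while if $x$ has order $p^i$ then $\varphi(x)=x^{n(x)}$ for an integer $n(x)$ that is coprime to $p$ and well defined modulo $p^i$ (coprimality because $\varphi$ preserves orders). These remarks reduce everything to controlling how the exponents $n(x)$ vary with $x$.

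For (i), suppose $A$ is not periodic and fix $a$ of infinite order. Composing $\varphi$ with the inversion map (itself a power automorphism) I may assume $\varphi(a)=a$, and the goal becomes $\varphi=\mathrm{id}$. For $b$ of infinite order I would write $\varphi(b)=b^{\delta}$ with $\delta=\pm1$ and test $\varphi$ on a product $a^{k}b$ with $k\neq0$ chosen to have infinite order (possible since $a^{k}b$ is torsion for at most one value of $k$); expanding $\varphi(a^{k}b)=(a^{k}b)^{\pm1}$ in two ways and using that $A$ is abelian rules out the inverse case and forces $\delta=1$. For $b$ of finite order, $ab$ has infinite order, so $\varphi(ab)=ab$ together with $\varphi(a)=a$ gives $\varphi(b)=b$. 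Hence $\varphi=\mathrm{id}$, and $\PAut A=\{\mathrm{id},\text{inversion}\}$ has order $2$.

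The heart of (ii), and the step I expect to be the main obstacle, is the $p$-group case: showing that a power automorphism $\varphi$ of an abelian $p$-group $A$ acts by one global exponent. The point is to prove that the locally defined exponents $n(x)$ are mutually compatible. Given $x,y$, I would pass to the finite subgroup $\langle x,y\rangle$, which is $\varphi$-invariant by cyclic-invariance, decompose it as a direct sum of cyclic groups, and compare $\varphi$ on a generator of maximal order with $\varphi$ on the other generators by evaluating on their products and using uniqueness of expressions in a direct sum; this yields a single exponent on $\langle x,y\rangle$, and in particular $n(x)\equiv n(y)$ to the precision of the smaller order. Since all orders here are powers of $p$, hence totally ordered under divisibility, the system $\{n(x)\bmod \mathrm{ord}(x)\}$ is coherent, so it assembles into a single unit $t$ of $R$ (take $t=n(x_0)$ for an element $x_0$ realizing $\exp A$ when $\exp A=p^n$, and the inverse limit $t\in\varprojlim \Z/p^i\Z=\Z_p$ when $\exp A=\infty$), with $\varphi(x)=x^{t}$ for all $x$. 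Conversely each $t\in R^{\times}$ defines a power automorphism $a\mapsto a^{t}$ (well defined on torsion elements, multiplicative since $A$ is abelian, invertible because $t$ is a unit), so $\varphi\mapsto t$ is the desired isomorphism $\PAut A\cong R^{\times}$.

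Finally, for a general periodic $A$ I would use the primary decomposition $A=\bigoplus_{p}A_p$. Each $A_p$ is characteristic, so restriction gives a homomorphism $\PAut A\to\prod_{p}\PAut A_p$, which is injective because $A$ is generated by the $A_p$. For surjectivity, given $(\psi_p)_p$ with global exponents $t_p$ supplied by the $p$-group case, I would define $\varphi$ componentwise; to see it is a power automorphism I evaluate on an arbitrary $a=\prod_{p\in S}a_p$ with $S$ finite and invoke the Chinese Remainder Theorem to produce an integer $t$ with $t\equiv t_p$ modulo $\mathrm{ord}(a_p)$ for each $p\in S$ (possible since these orders are pairwise coprime prime powers), whence $\varphi(a)=a^{t}$. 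This gives the isomorphism $\PAut A\cong\prod_{p}\PAut A_p$ and completes (ii).
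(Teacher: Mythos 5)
The paper offers no proof of this statement: it is quoted from Robinson \cite[Lemma 4.1.2]{Ro3}, so there is no in-paper argument to compare against, and your proposal must be judged on its own. It is correct, and it is essentially the classical proof of Robinson's lemma. You legitimately use the fact, recorded in the paper immediately before the statement, that $\PAut A$ is a group, so that $\varphi^{-1}$ is again a power automorphism; this is what makes each $\langle x\rangle$ genuinely $\varphi$-invariant and the local exponents units, and it is the one step that would otherwise need a separate argument. In part (i) the excluded cases do collapse correctly: testing $\varphi$ on $a^kb$ of infinite order, the possibility $\varphi(a^kb)=(a^kb)^{-1}$ forces either $a^{2k}=1$ or $(a^kb)^2=1$, both against elements of infinite order. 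In the $p$-primary case the reduction to two-generator (hence finite) subgroups, the comparison of exponents via the direct-sum decomposition and evaluation on a product of basis elements, and the assembly of the coherent system of residues (coherent precisely because $p$-power orders are totally ordered by divisibility) into a unit of $\Z/p^n\Z$ or of $\Z_p=\varprojlim \Z/p^i\Z$ are all sound, as is the passage to general periodic $A$ via the characteristic primary components and the Chinese Remainder Theorem. The only points left implicit --- that $\varphi\mapsto t$ is multiplicative and injective, and that an abelian $p$-group of infinite exponent contains elements of order $p^i$ for every $i$ --- are immediate.
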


Let $A$ be an abelian $p$-group, and let $\varphi\in\PAut A$. According to (ii) of the previous theorem, every power automorphism of $A$ can be written in the form $a\longmapsto a^t$ for some $t\in\Z_p^{\times}$.
Simply observe that, if $\exp A=p^n<\infty$, it suffices to consider a representative in $\Z$ of the element in $\Z/p^n\Z$ given by Theorem \ref{PAut A robinson}. The $p$-adic integer $t$ is uniquely determined if $\exp A=\infty$, and it is only determined modulo $p^n$ if $\exp A=p^n<\infty$.
In the sequel, we write $\exp \varphi=t$, if $\varphi(a)=a^t$ for every $a\in A$.

\begin{lem}
\label{valua}
Let $\varphi$ be a power automorphism of an abelian $p$-group $A$ of finite rank. If $t=\exp \varphi$ is of infinite order in $\Z_p^{\times}$, then
$C_A(\varphi^k)$ is finite for every $k\geq 1$.
\end{lem}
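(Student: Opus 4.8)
The plan is to compute $C_A(\varphi^k)$ explicitly as an $\Omega$-subgroup of bounded exponent, and then to deduce finiteness from the finite rank hypothesis. Since $\varphi$ is the power automorphism $a\mapsto a^{t}$ with $t=\exp\varphi$, its iterate acts by $\varphi^k(a)=a^{t^k}$, so
\[
C_A(\varphi^k)=\{a\in A:\varphi^k(a)=a\}=\{a\in A:a^{t^k-1}=1\}.
\]
Here $t^k-1$ is an element of $\Z_p$ and the power $a^{t^k-1}$ is to be read in the natural $\Z_p$-module structure on the abelian $p$-group $A$ (i.e.\ reducing the exponent modulo the order of $a$). Thus the centralizer is governed entirely by the $p$-adic integer $t^k-1$.

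The next step is to exploit the hypothesis that $t$ has infinite order in $\Z_p^{\times}$: this forces $t^k\neq 1$, so $s_k:=t^k-1$ is a \emph{nonzero} $p$-adic integer and therefore has finite $p$-adic valuation $m_k$. Writing $s_k=p^{m_k}u_k$ with $u_k\in\Z_p^{\times}$, I would discard the unit factor: since raising to the power $u_k$ is a bijection of $A$ (with inverse $a\mapsto a^{u_k^{-1}}$) and $a^{s_k}=(a^{p^{m_k}})^{u_k}$, one has $a^{s_k}=1$ if and only if $a^{p^{m_k}}=1$. Consequently
\[
C_A(\varphi^k)=\{a\in A:a^{p^{m_k}}=1\}=\Omega_{m_k}(A),
\]
a subgroup of exponent dividing $p^{m_k}$.

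It then remains to argue that $\Omega_{m_k}(A)$ is finite. Because $A$ has finite rank, $\Omega_1(A)=\Omega_1(\Omega_{m_k}(A))$ is finite, so $\Omega_{m_k}(A)$ is an abelian $p$-group of finite rank and bounded exponent $p^{m_k}$; by Pr\"ufer's theorem it decomposes as a direct sum of boundedly many cyclic groups of order at most $p^{m_k}$, hence is finite. This gives the claim for every $k\geq 1$. The role of the hypothesis on $t$ shows up precisely in the passage from $t^k\neq 1$ to $m_k<\infty$: were $t$ of finite order, some $s_k$ would vanish and $C_A(\varphi^k)$ would equal all of $A$. The only genuinely delicate point is the reduction $a^{s_k}=1\iff a^{p^{m_k}}=1$, that is, the verification that the unit part $u_k$ of $s_k$ can be stripped off; the rest is bookkeeping with the $\Z_p$-action and the structure theory of abelian $p$-groups of finite rank.
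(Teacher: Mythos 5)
Your proposal is correct and follows essentially the same route as the paper: both factor $t^k-1$ as $p^{m}u$ with $u\in\Z_p^{\times}$ (the paper does this via the $p$-adic digit expansion of $t^k$), strip off the unit to identify $C_A(\varphi^k)$ with $\Omega_m(A)$, and conclude finiteness from the finite rank hypothesis. The only difference is cosmetic: you invoke Pr\"ufer's theorem for the last step, while the paper relies on its stated characterization of finite-rank abelian $p$-groups as finite direct sums of cyclic and quasicyclic groups.
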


\begin{proof}
Given $k\geq 1$, let us write $t^k=1+\sum_{i\ge j} \, t_ip^i$ where $j\geq 0$, $t_i\in \{0,\ldots, p-1\}$ for every $i$
and $t_j\ne 0$.
Then $a\in C_A(\varphi^k)$ if and only if $a^{p^j}=1$,
since $\sum_{i\ge j} \, t_ip^{i-j}$ is an invertible element in $\Z_p^{\times}$.
Hence $C_A(\varphi^k)=\Omega_{j}(A)$. Since $A$ is of finite rank, it follows that $C_A(\varphi^k)$ is finite.
\end{proof}

Recall that the rank of an abelian $p$-group is the dimension, as a vector space over
the field with $p$ elements, of the subgroup formed by the elements of order at most $p$. In that case, having finite rank is equivalent to requiring that the group is a direct sum of finitely many cyclic and quasicyclic groups \cite[4.3.13]{Ro}.
More generally, if $A$ is an abelian group, the $p$-rank of $A$ is defined as the rank of
$A_p$, and the torsion-free rank, or $0$-rank, of $A$ is the cardinality of a maximal independent subset of elements of $A$ of infinite order.
These are denoted by $r_p(A)$ and $r_0(A)$, respectively.
The total rank of $A$ is
\[
r(A) = r_0(A) + \sum_p \, r_p(A)
\]
where $p$ runs over all prime numbers, and by definition a soluble group $G$ is said to have finite abelian total rank if it has an abelian series whose factors are of finite total rank. Finally, a group $G$ has finite (Pr\"ufer) rank $r$ if every finitely generated subgroup of $G$ can be generated by $r$ elements and $r$ is the least such integer. For more details we refer to \cite[page 84]{LR}.

\section{Locally nilpotent FCI-groups}

This section is devoted to infinite locally nilpotent FCI-groups.
We split the determination of such groups into two cases, according as the group is periodic or not.

If an infinite locally nilpotent FCI-group is periodic, then Theorem 4.4 of \cite{FLTT3} is available. Nevertheless, in this case, it will be easier to apply that theorem to an appropriate Sylow subgroup than to the whole FCI-group. We supply for the reader's convenience the characterization of an infinite locally finite $p$-group which is also an FCI-group.

\begin{lem}
\label{determination locally finite FCI-groups}
Let $G$ be a non-Dedekind $p$-group, for $p$ a prime. Then $G$ is an infinite locally finite FCI-group if and only if $p=2$ and $G=\langle g,A \rangle$, where $A$ is infinite abelian of finite rank, and $g$ is an element of order at most $4$ such that $g^2\in A$ and $a^g=a^{-1}$ for all $a\in A$.
\end{lem}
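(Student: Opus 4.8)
The plan is to prove both implications, the backward one being a direct computation and the forward one carrying the real content.

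For the \emph{if} direction, assume $G=\l g,A\r$ has the stated shape. Since $A$ is an abelian $2$-group and $|G:A|=2$, the group $G$ is an infinite locally finite $2$-group. I would first record that every cyclic subgroup generated by an element of $A$ is normal in $G$, being centralized by $A$ and inverted (hence normalized) by $g$; so any non-normal $\l x\r$ must have $x\in gA$. A short calculation with $a^g=a^{-1}$ and $g^2\in A$ then shows that, for $x=ga$, an element $b\in A$ commutes with $x$ iff $b^2=1$, while $gb$ commutes with $x$ iff $b^2=a^2$. Hence $C_G(x)=\l x\r\,\Omega_1(A)$, and since $A$ has finite rank $\Omega_1(A)$ is finite; therefore $|C_G(x):\l x\r|\le|\Omega_1(A)|<\infty$, establishing the FCI-condition.

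For the \emph{only if} direction, the crucial preliminary remark is that in a locally finite $p$-group every element has finite order, so the FCI-condition forces $C_G(x)$ to be \emph{finite} whenever $\l x\r\ntriangleleft G$. I would start from an infinite abelian subgroup $B$ (which exists by the Hall--Kulatilaka--Kargapolov theorem). For $x\in B$ the centralizer $C_G(x)\supseteq B$ is infinite, so $\l x\r\triangleleft G$; consequently every conjugate of $x$ lies in $\l x\r$, whence $B\triangleleft G$ and each $g\in G$ induces a power automorphism on $B$. By Theorem \ref{PAut A robinson} this action is given by a uniform exponent $t_g\in\Z_p^{\times}$, and since $G$ is a $p$-group each $t_g$ has $p$-power order. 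As the torsion of $\Z_p^{\times}$ is trivial for $p$ odd and equals $\{\pm1\}$ for $p=2$, an odd $p$ would force every $g$ to centralize $B$, i.e. $B\le Z(G)$; but non-Dedekindness supplies a non-normal $\l x\r$ with $C_G(x)$ finite, and $Z(G)\le C_G(x)$ then makes $Z(G)$ finite, contradicting the infiniteness of $B$. Hence $p=2$, each $g$ either centralizes or inverts $B$, and $C:=C_G(B)$ has index $2$ in $G$, with every element outside $C$ inverting $B$.

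It remains to identify $A:=C$. Choosing a non-normal $\l w\r$, the element $w$ cannot centralize $B$ (else $B\le C_G(w)$ is infinite), so $w$ inverts $B$ and $C_G(w)\supseteq C_B(w)=\Omega_1(B)$; finiteness of $C_G(w)$ makes $\Omega_1(B)$ finite, so $B$ has finite rank and, being infinite, unbounded exponent. The same centralizer argument gives $\l c\r\triangleleft G$ for every $c\in C$, so $C$ is a Dedekind $2$-group; it cannot be Hamiltonian, for in $C=Q_8\times E$ the subgroup $B\cap Z(C)$ would be infinite (as $C/Z(C)$ is finite) yet elementary abelian of finite rank, hence finite. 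Thus $A=C$ is abelian of index $2$ and finite rank, and $G=\l g,A\r$ for any $g\notin A$ with $g^2\in A$. Since every cyclic subgroup of $A$ is normal in $G$, such a $g$ acts on $A$ as a power automorphism of uniform exponent by Theorem \ref{PAut A robinson}; restricting to $B$ (of infinite exponent) pins this exponent to $-1$, so $g$ inverts $A$; and comparing $(g^2)^g=g^2$ with $(g^2)^g=g^{-2}$ yields $g^4=1$, giving exactly the asserted structure.

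The main obstacle I anticipate is the forward direction's passage from ``$g$ inverts the auxiliary subgroup $B$'' to ``$g$ inverts the self-centralizing $A$'', together with the parallel need to exclude the Hamiltonian alternative for $C$. Both rest on playing the finiteness of centralizers of non-normal elements against the infiniteness and finite rank of $B$, combined with the uniform-exponent description of power automorphisms furnished by Theorem \ref{PAut A robinson}.
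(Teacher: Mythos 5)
Your proof is self-contained, whereas the paper's proof simply quotes the classification of infinite locally finite FCI-groups from Theorem 4.4 of \cite{FLTT3} and then refines it (using Theorem \ref{PAut A robinson} to force $m=2$ and $g^4=1$). Your \emph{if} direction is correct. However, the \emph{only if} direction has a genuine gap exactly at the step where you conclude $p=2$ and that every element centralizes or inverts $B$.

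The problem is that you apply the torsion structure of $\Z_p^{\times}$ to the exponents $t_g$ of the power automorphisms induced on $B$. By Theorem \ref{PAut A robinson}(ii), $\PAut B\cong\Z_p^{\times}$ only when $\exp B=\infty$; if $\exp B=p^n<\infty$ then $\PAut B\cong(\Z/p^n\Z)^{\times}$, which contains many nontrivial elements of $p$-power order (e.g.\ $1+p$ for odd $p$, or $5$ for $p=2$ and $n\ge 3$). So neither ``odd $p$ forces $B\le Z(G)$'' nor ``for $p=2$ each $g$ centralizes or inverts $B$'' follows at that stage. And at that stage you have not excluded bounded exponent: Hall--Kulatilaka--Kargapolov only supplies \emph{some} infinite abelian $B$, and your later proof that $B$ has finite rank (hence unbounded exponent) relies on already knowing that the non-normal $w$ inverts $B$ --- i.e.\ on the conclusion in question. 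The repair is short and should precede the $\Z_p^{\times}$ argument: since $G$ is a locally finite $p$-group, every $b\in B$ of order $p$ generates a normal subgroup of $G$ which is central in each finite $p$-group $\langle b,g\rangle$, so $\Omega_1(B)\le Z(G)\le C_G(w)$ for any non-normal $\langle w\rangle$; hence $\Omega_1(B)$ is finite, $B$ has finite rank and therefore infinite exponent, and only then is $\PAut B\cong\Z_p^{\times}$ available. The same observation is needed to justify your unproved assertion that $A=C_G(B)$ has finite rank (or deduce it afterwards from $\Omega_1(A)=C_A(w)\le C_G(w)$ once you know $w$ inverts $A$). With these insertions the remainder of your argument --- excluding the Hamiltonian case, pinning the uniform exponent on $A$ to $-1$ via its restriction to $B$, and deducing $g^4=1$ --- is correct.
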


\begin{proof}
By Theorem 4.4 of \cite{FLTT3}, $G$ is an infinite locally finite FCI-group if and only if $p=2$ and $G=\langle g,A \rangle$, where $A$ is infinite abelian of finite rank, and the action of $g$ on $A$ is given by a power automorphism of order $m>1$ such that $|G:A|=m$. In particular $A$ is of infinite exponent, so that $m=2$ by (ii) of Theorem \ref{PAut A robinson}, that is, $g$ acts on $A$ as the inversion. Then $C_A(g)=\Omega_1(A)$ and, since $g^2\in C_A(g)$, we conclude that the order of $g$ is at most~$4$.
\end{proof}

\begin{thm}
\label{periodic locally nilpotent}
Let $G$ be a non-Dedekind infinite periodic group.
Then $G$ is a locally nilpotent FCI-group if and only if $G=P\times Q$, where $P$ and $Q$ are as follows:
\begin{enumerate}
\item
$P=\langle g, A\rangle$ is a $2$-group, where $A$ is infinite abelian of finite rank, and $g$ is an element of order at most $4$ such that $g^2\in A$ and $a^g=a^{-1}$ for all $a\in A$.
\item
$Q$ is a finite abelian $2'$-group.
\end{enumerate}
\end{thm}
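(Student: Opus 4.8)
The plan is to prove both implications by analysing the behaviour of centralizers in the (restricted) direct product of Sylow subgroups, reducing the essential primary factor to Lemma~\ref{determination locally finite FCI-groups}.

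For necessity, I would first recall that a periodic locally nilpotent group is the restricted direct product of its Sylow subgroups, so $G = \Dr_p G_p$ with each $G_p$ a locally finite $p$-group. The central observation is the following: if $x$ lies in a single factor $G_p$, then the remaining factors commute with $x$, so $C_G(x) = C_{G_p}(x) \times \Dr_{q\ne p} G_q$; moreover the conjugate of $x$ by an arbitrary element of $G$ coincides with its conjugate by the $G_p$-component, so $\l x\r \ntriangleleft G$ if and only if $\l x\r \ntriangleleft G_p$. Since $G$ is non-Dedekind and the primary components of a cyclic group are characteristic, there is a prime $p$ and an element $x \in G_p$ with $\l x\r \ntriangleleft G$. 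The FCI-condition then forces $|C_G(x):\l x\r|$, and hence $|\Dr_{q\ne p} G_q|$, to be finite; as $G$ is infinite this singles out a unique infinite Sylow factor $G_p$, which is non-Dedekind by the normality reduction above. Applying Lemma~\ref{determination locally finite FCI-groups} to the infinite, locally finite, non-Dedekind FCI-group $G_p$ yields $p=2$ together with the stated structure of $P := G_2$.

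It then remains to show that $Q := \Dr_{q\ne 2} G_q$ is abelian (it is already a finite $2'$-group). Here I would argue by contradiction: a non-abelian odd Sylow subgroup $G_q$ is necessarily non-Dedekind, so it contains a non-normal cyclic subgroup $\l y\r$ with $y \in G_q$; but then $C_G(y) \supseteq \l y\r \times G_2$, and the infinitude of $G_2$ contradicts the FCI-condition. Hence every odd Sylow subgroup is abelian and $G = P \times Q$ has the required form. For sufficiency, local nilpotency is immediate, since a direct product of a locally finite $2$-group $P$ and a finite abelian group $Q$ is locally nilpotent. To verify the FCI-condition, write $x = uv$ with $u \in P$ and $v \in Q$; coprimality of the orders gives $\l x\r = \l u\r \times \l v\r$, and since $Q$ is abelian we have $C_G(x) = C_P(u) \times Q$, so that $C_G(x)/\l x\r \cong C_P(u)/\l u\r \times Q/\l v\r$. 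Because $\l x\r \ntriangleleft G$ is equivalent to $\l u\r \ntriangleleft P$, the FCI-property of $P$ (guaranteed by Lemma~\ref{determination locally finite FCI-groups}) makes $|C_P(u):\l u\r|$ finite, while $|Q:\l v\r|$ is finite as $Q$ is finite; multiplying gives $|C_G(x):\l x\r| < \infty$.

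The main obstacle I anticipate lies in the necessity direction, specifically in transferring the normality and centralizer data cleanly between $G$ and its Sylow factors, and in extracting both the finiteness of all but one Sylow subgroup and the abelianness of the odd part purely from the global FCI-condition. Once the single-prime-support centralizer computation $C_G(x) = C_{G_p}(x) \times \Dr_{q\ne p} G_q$ and the normality equivalence are established, the reduction to the already-settled locally finite case via Lemma~\ref{determination locally finite FCI-groups} should be routine.
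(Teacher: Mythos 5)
Your proposal is correct and follows essentially the same route as the paper: decompose $G$ into its Sylow subgroups, locate the prime carrying a non-normal cyclic subgroup, use the FCI-condition to force the complementary factor $Q$ to be finite (and then Dedekind, hence abelian as a $2'$-group), and reduce the infinite factor to Lemma~\ref{determination locally finite FCI-groups}. The only cosmetic difference is in the converse, where you verify the FCI-condition directly from the factorization $C_G(x)=C_P(u)\times Q$ rather than citing the classification of infinite locally finite FCI-groups; both rest on the same earlier result.
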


\begin{proof}
Assume first that $G$ is a locally nilpotent FCI-group.
Since $G$ is not a Dedekind group, there exists a Sylow $p$-subgroup $P$ with a non-normal cyclic subgroup $\l x\r$. If we write $G=P\times Q$ with $Q$ a $p'$-group, then $Q$ is finite, since it is contained in $C_G(x)$ and $G$ is a periodic FCI-group. Thus $P$ is an infinite locally finite $p$-group which is also an FCI-group, and (i) follows from Lemma \ref{determination locally finite FCI-groups}. Since all elements of $Q$ generate a normal subgroup of $G$, then $Q$ is a Dedekind $2'$-group, and in particular it is abelian.

Conversely, let $G=P\times Q$ as in the statement. It is obvious that $G$ is locally nilpotent. Furthermore, one can readily see that $G$ is a group of the type described in (ii) of Theorem 4.4 of \cite{FLTT3}, and so $G$ is an FCI-group.
\end{proof}

Next we consider the non-periodic case. The following lemma is well-known and an easy exercise.

\begin{lem}
\label{normal is central}
Let $G$ be a locally nilpotent group, and let $x\in G$ be an element of infinite order.
If $\langle x \rangle \lhd G$ then $x\in Z(G)$.
\end{lem}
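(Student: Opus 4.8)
The plan is to exploit the fact that a normal cyclic subgroup generated by an element of infinite order admits very few conjugation automorphisms, and then to use local nilpotency to rule out the non-trivial one.

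Since $\l x\r\lhd G$ and $x$ has infinite order, $\l x\r$ is infinite cyclic, so $\Aut\l x\r\cong\{\pm 1\}$. Conjugation by any $g\in G$ induces an automorphism of $\l x\r$, and hence for each $g$ we have either $x^g=x$ or $x^g=x^{-1}$. It therefore suffices to show that the second possibility never occurs; equivalently, I would assume for contradiction that $x^g=x^{-1}$ for some $g\in G$ and derive a contradiction.

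Next I would pass to the finitely generated subgroup $H=\l x,g\r$, which is nilpotent by local nilpotency, of class $c$ say. Using the convention $[a,b]=a^{-1}a^b$ and denoting by $[x,{}_n\,g]$ the left-normed commutator with $n$ copies of $g$, the relation $x^g=x^{-1}$ gives $[x,g]=x^{-1}x^g=x^{-2}$. A routine induction then shows $[x,{}_n\,g]=x^{(-2)^n}$ for all $n\ge 1$: since $(x^m)^g=x^{-m}$, one computes $[x^{(-2)^n},g]=x^{-(-2)^n}\cdot x^{-(-2)^n}=x^{(-2)^{n+1}}$, which advances the induction.

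Finally, because $H$ is nilpotent of class $c$, the commutator $[x,{}_c\,g]$, being of weight $c+1$, lies in $\gamma_{c+1}(H)=1$, whence $x^{(-2)^c}=1$. As $(-2)^c\ne 0$, this contradicts the fact that $x$ has infinite order. Therefore $x^g=x$ for every $g\in G$, that is, $x\in Z(G)$. The only point requiring any care is the commutator identity, and even that is a direct calculation, so I do not anticipate a genuine obstacle.
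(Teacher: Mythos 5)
Your proof is correct. The paper itself supplies no proof of this lemma (it is dismissed as ``well-known and an easy exercise''), and your argument is precisely the standard one the authors have in mind: restrict the conjugation action to $\Aut\langle x\rangle\cong\{\pm 1\}$, pass to the finitely generated (hence nilpotent, of class $c$) subgroup $\langle x,g\rangle$, and kill the inversion case via $1=[x,{}_c\,g]=x^{(-2)^c}$ --- the very same computation the paper uses in its proof of part (ii) of Lemma \ref{tf} to rule out an inversion action in the torsion-free setting.
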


If $G$ is a nilpotent group and the centre of $G$ has finite exponent, then $G$ itself has finite exponent (see, for example,
\cite[5.2.22 (i)]{Ro}). The same proof applies to show that, if $N$ is a normal subgroup of $G$ such that
$N\cap Z(G)$ is of finite exponent, then so is $N$. This result will be needed in the proof of the following lemma.
We also recall that, according to a theorem of Mal'cev,
a soluble group of automorphisms of a finitely generated abelian group is polycyclic (see \cite[Theorem 3.27]{Ro2}).

\begin{lem}
\label{fg}
Let $G$ be a nilpotent FCI-group. Then either $G$ is a Dedekind group, or $G$ is finitely generated.
\end{lem}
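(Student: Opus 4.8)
The plan is to dispose of the Dedekind alternative at once and, assuming henceforth that $G$ is \emph{not} a Dedekind group, to prove that $G$ is finitely generated. The strategy is to locate a finitely generated self-centralizing abelian normal subgroup and then to read off the finite generation of $G$ from Mal'cev's theorem. Concretely, I would take $A$ to be a maximal normal abelian subgroup of $G$ (these exist by Zorn's lemma). Two standard features of nilpotent groups are used here: first $Z(G)\le A$, since $AZ(G)$ is a normal abelian subgroup containing $A$; and second $A$ is self-centralizing, $C_G(A)=A$ (if $C_G(A)$ properly contained $A$, then the nontrivial normal subgroup $C_G(A)/A$ of the nilpotent quotient $G/A$ would meet its centre, producing an element $a\notin A$ with $[a,A]=1$ and $[a,G]\le A$, so that $\langle A,a\rangle$ would be a normal abelian subgroup properly containing $A$, against maximality). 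Granting that $A$ is finitely generated, the quotient $G/A=G/C_G(A)$ embeds in $\Aut A$ and is nilpotent, hence soluble, so Mal'cev's theorem forces $G/A$ to be polycyclic and in particular finitely generated; then $G$, an extension of the finitely generated group $A$ by the finitely generated group $G/A$, is itself finitely generated.

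It therefore remains to prove that $A$ is finitely generated, and this is where the FCI-condition enters. I first record that $Z(G)$ is finitely generated: choosing any non-normal cyclic subgroup $\langle y\rangle$ (one exists because $G$ is not Dedekind), the inclusion $Z(G)\le C_G(y)$ together with the FCI-condition shows that $Z(G)/(Z(G)\cap\langle y\rangle)$ is finite, and since $Z(G)\cap\langle y\rangle$ is cyclic, $Z(G)$ is a finitely generated abelian group. Next I split according to the behaviour of the cyclic subgroups lying inside $A$. If some $a\in A$ satisfies $\langle a\rangle\ntriangleleft G$, then $A\le C_G(a)$ and the FCI-condition force $A/\langle a\rangle$ to be finite, so $A$ is abelian with a cyclic subgroup of finite index and hence finitely generated.

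The remaining case, in which every cyclic subgroup of $A$ is normal in $G$, is the delicate one and the main obstacle. Here Lemma~\ref{normal is central} shows that every element of $A$ of infinite order lies in $Z(G)$, whence $A=TZ(G)$, where $T$ is the torsion subgroup of $A$; as $Z(G)$ is already known to be finitely generated, it suffices to prove that $T$ is finite, and the plan is to bound its exponent and its rank separately. For the exponent, $T$ is normal in $G$ and $T\cap Z(G)$ is the finite torsion subgroup of $Z(G)$, so the result recalled just before the statement, that $N\trianglelefteq G$ with $N\cap Z(G)$ of finite exponent forces $N$ of finite exponent, yields that $T$ has finite exponent. For the rank, every subgroup of prime order of $T$ is normal in $G$ by hypothesis, and a normal subgroup of prime order of a nilpotent group is central, so all elements of prime order of $T$ lie in $Z(G)$; hence $\Omega_1(T_p)\le Z(G)$ is finite for each prime $p$, only finitely many primes occur in $\pi(T)$, and each $T_p$ has finite rank. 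An abelian $p$-group of finite rank and finite exponent is finite, so every $T_p$, and therefore $T$, is finite. This gives the finite generation of $A$ and completes the argument. The crux is precisely this last case: isolating the torsion subgroup and controlling it simultaneously in exponent (through the finite-exponent result) and in rank (through the centrality of minimal normal subgroups) is the step in which the FCI-hypothesis is made to do its work.
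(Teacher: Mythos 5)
Your proof is correct and follows essentially the same route as the paper's: a maximal abelian normal subgroup $A$ with $C_G(A)=A$, Mal'cev's theorem to reduce everything to the finite generation of $A$, the dichotomy on whether all cyclic subgroups of $A$ are normal in $G$, and, in the delicate case, the finite-exponent remark together with the centrality of prime-order normal subgroups to control the torsion. The only differences are organizational: you argue directly and package the mixed case as $A=TZ(G)$ with $Z(G)$ finitely generated, whereas the paper argues by contradiction and splits according to whether $A$ is periodic.
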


\begin{proof}
Suppose that there exists $x\in G$ such that $\l x\r \ntriangleleft G$, and that $G$ is not finitely generated. Let $A$ be a maximal abelian normal subgroup of $G$. Then $A=C_G(A)$ \cite[5.2.3]{Ro} and $G/A$ embeds in $\Aut A$. It follows that $A$ cannot be finitely generated, by the above result of Mal'cev, and therefore $\l a\r\lhd G$ for every $a\in A$: in fact, if there exists $a\in A$ such that $\l a\r\ntriangleleft G$, then $|A:\l a\r|\leq |C_G(a):\l a\r|$ is finite and $A$ is finitely generated. Let $T$ be the torsion subgroup of $A$, and suppose $T<A$. Applying Lemma \ref{normal is central}, we get $A\leq Z(G)$. Thus $|A:A\cap \l x\r|$ is finite and $A$ is finitely generated, a contradiction. Hence $A$ must be periodic, in particular $A\cap \l x\r$ is finite. Now $G$ is nilpotent, and so every element of $A$ of prime order lies in $Z(G)$. Furthermore we have
\[
|A\cap Z(G)|
\le
|C_A(x)|
=
|C_A(x):A\cap \langle x \rangle| |A\cap \langle x \rangle|<\infty.
\]
Thus $A$ has finitely many elements of prime order and, consequently, it is of finite total rank.
Finally, as mentioned before the lemma, since $G$ is nilpotent and $A\cap Z(G)$ is finite, $A$ is of finite exponent.
This allows us to conclude that $A$ is finite, a contradiction.
\end{proof}

\begin{lem}
\label{tf}
Let $G$ be a locally nilpotent FCI-group, and let $T$ be the torsion subgroup of $G$. Suppose that $G$ is non-abelian, and $T<G$. Then:
\begin{enumerate}
\item All elements of $T$ generate a normal subgroup of $G$, and $T$ is a Dedekind group given by the direct product of finitely many $p$-groups of finite rank.
\item The quotient group $G/T$ is cyclic.
\end{enumerate}
\end{lem}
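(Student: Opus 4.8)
The plan is to treat the two assertions in turn, using throughout that $T$, being periodic and locally nilpotent, is the direct product of its Sylow subgroups $T_p$, each $T_p$ being characteristic and hence invariant under conjugation by elements of $G$.

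For (i), I would first show that $\l t\r\lhd G$ for every $t\in T$. Pick $x\in G$ of infinite order (which exists as $T<G$) and put $H=\l t,x\r$, a finitely generated, hence nilpotent, subgroup. If $\l t\r\ntriangleleft G$ then, since $\l t\r$ is finite, the FCI-condition forces $C_G(t)$ to be finite; but $H$ is infinite, its torsion subgroup $T(H)$ is finite, and $C_H(T(H))$ has finite index in $H$, so $C_H(T(H))\le C_G(t)$ is infinite, a contradiction. Thus every subgroup of $T$ is normal in $G$, whence $T$ is a Dedekind group and conjugation by any element of $G$ induces a power automorphism of $T$. Next I would note that some $x$ of infinite order has $\l x\r\ntriangleleft G$: otherwise Lemma \ref{normal is central} makes every element of infinite order central, and for $t\in T$ and central $x$ of infinite order the element $xt$ again has infinite order, so $t=(xt)x^{-1}\in Z(G)$ and $G$ is abelian, against hypothesis. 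For such an $x$ we have $\l x\r\cap T=1$, so $C_T(x)$ embeds in the finite group $C_G(x)/\l x\r$ and is therefore finite.

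The decisive point of (i) is that local nilpotency pins down the power automorphism $\varphi$ induced by $x$ on each $T_p$. Writing $\varphi(a)=a^{t_p}$ with $t_p\in\Z_p^{\times}$ and taking $c\in T_p$ of order $p$, nilpotency of $\l c,x\r$ forces the commutators $[c,x,\dots,x]=c^{(t_p-1)^n}$ to vanish, i.e.\ $t_p\equiv 1\pmod p$. Hence $C_{T_p}(x)\supseteq\Omega_1(T_p)$ for every $p$, and as $C_T(x)=\prod_p C_{T_p}(x)$ is finite we deduce that $T_p\neq 1$ for only finitely many $p$ and that each $\Omega_1(T_p)$, and so each $T_p$, has finite rank. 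This is exactly where local nilpotency is indispensable: without it, $T=\bigoplus_p\Z/p$ with $x$ acting fixed-point-freely on each factor would still satisfy the FCI-condition. Combined with Baer's description of Dedekind groups, this yields (i).

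For (ii) set $\bar G=G/T$, a torsion-free locally nilpotent group. The engine is that for any $u$ of infinite order with $\l u\r\ntriangleleft G$ the group $C_G(u)$ is virtually infinite cyclic, so its image $C_G(u)T/T\cong C_G(u)/C_T(u)$ is infinite cyclic. I would first bound centralizers in $\bar G$: if $\bar h\in C_{\bar G}(\bar u)$ then $[u,h]\in T$, so in $N=\l u,h\r$ the quotient $N/(N\cap T)$ is abelian and $N'\le T$ is finite, whence the conjugation orbit $\{u^{h^i}\}\subseteq uN'$ is finite and $h^M\in C_G(u)$ for some $M\ge 1$. Thus $C_{\bar G}(\bar u)$ is periodic over $C_G(u)T/T$, so it has torsion-free rank $1$ whenever $\bar u\notin Z(\bar G)$. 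If $\bar G$ were non-abelian, choosing $\bar u,\bar v$ with $[\bar u,\bar v]\neq 1$ and a nontrivial central $\bar c$ in the last term of the lower central series of $\l\bar u,\bar v\r$ would place the free abelian group $\l\bar u,\bar c\r$ of rank $2$ inside $C_{\bar G}(\bar u)$ — using that torsion-free nilpotent groups have unique roots, so that $\bar u^a\in Z(\l\bar u,\bar v\r)$ would force $[\bar u,\bar v]=1$ — contradicting rank $1$. Hence $\bar G$ is abelian, that is $G'\le T$. Now the same orbit argument applies to $N=\l x,h\r$ for \emph{every} $h\in G$, since $N'\le G'\le T$ is finite; this gives $h^M\in C_G(x)$ and makes $G/C_G(x)$ periodic, so $\bar G$ being torsion-free yields $G=C_G(x)T$ and $\bar G\cong C_G(x)/C_T(x)$ is infinite cyclic. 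I expect the main obstacle to be the passage between centralizers in $G$ and in $\bar G$: the orbit argument, which converts finiteness of $N'$ into a genuine power of $h$ centralizing $x$, is what bridges this gap, and arranging the logical order correctly — normality of $T$, then abelianness of $\bar G$, then cyclicity — is the delicate part.
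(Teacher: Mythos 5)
Parts of your argument are sound: your proof of (i) is essentially the paper's (the only blemish is that you write the power automorphism on $T_p$ as $a\mapsto a^{t_p}$, which presupposes $T_p$ abelian and fails if $T_2$ contains a quaternion subgroup; but since your actual computation is carried out on a single element $c$ of order $p$, for which $c^x=c^k$ and $[c,x,\ldots,x]=c^{(k-1)^n}$ still make sense, the conclusion $\Omega_1(T_p)\le C_{T_p}(x)$ survives). Your route to the abelianness of $G/T$ --- bounding the torsion-free rank of $C_{\bar G}(\bar u)$ via the finite-orbit argument and then exhibiting a $\Z^2$ inside the centralizer of a non-central element of a torsion-free nilpotent group --- is correct and genuinely different from the paper, which instead passes to a torsion-free finitely generated quotient, takes a maximal abelian normal subgroup, and derives a contradiction from $\PAut A=\{\pm 1\}$ via the identity $[a,_c\,g]=a^{(-2)^c}$.

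However, the last step of (ii) contains a genuine gap. From ``every $h\in G$ has a power in $C_G(x)$'' you conclude that, $\bar G$ being torsion-free, $G=C_G(x)T$. This is a non sequitur: what you have shown is that $\bar G/(C_G(x)T/T)$ is periodic, and a quotient of a torsion-free group may well have torsion. Concretely, $\bar G=\Z[1/2]$ with $C=\Z$ satisfies all the properties you have established at that point ($\bar G$ torsion-free abelian, $C\cong\Z$, $\bar G/C$ periodic) without being cyclic. So your argument only proves that $G/T$ is torsion-free abelian of rank $1$, i.e.\ isomorphic to a subgroup of $\Q$; ruling out the non-finitely-generated subgroups of $\Q$ is precisely the hard part, and it requires a further appeal to the FCI-condition. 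The paper does this by first showing $G/T$ is locally cyclic, then proving $G/T$ is finitely generated via the existence of a nilpotent supplement $N$ with $NT$ of finite index (10.4.3 of \cite{LR}, using that $G$ has finite abelian total rank) together with Lemma \ref{fg} to show $N$ is finitely generated. Some substitute for this step is needed; without it the cyclicity of $G/T$ is not established.
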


\begin{proof}
Let $x\in T$ and $y\in G\setminus T$. Since $G$ is not abelian, we may assume that $y\not\in Z(G)$. Then, by Lemma \ref{normal is central}, we have $\langle y \rangle \ntriangleleft G$.

(i)
Put $H=\langle x,y \rangle$, and suppose that $\langle x \rangle \ntriangleleft G$.  Then $C_G(x)$ is finite and, since the torsion subgroup of $H$ is finite, $x$ has finitely many conjugates in $H$, i.e.\ $|H:C_H(x)|$ is finite. It follows that $H$ is finite, which is a contradiction. This proves that $\langle x \rangle \lhd G$. Now, if $x$ is of prime order, then $x\in Z(H)$. Thus all elements of $T$ of prime order commute with $y$. Since $|C_T(y)|\leq |C_G(y):\l y\r|$ is finite, then $\pi(T)$ is finite and $T_p$ is of finite rank, for every $p\in \pi(T)$.

(ii) First we prove that $G/T$ is abelian. Clearly, we may assume that $G$ is finitely generated. Then $G$ is nilpotent, say of class $c$, and $T$ is finite. Thus $G/T$ is an FCI-group \cite[Proposition 2.2]{FLTT3} and therefore we may also assume that $G$ is torsion-free. Suppose for a contradiction that $G$ is not abelian, and consider a maximal abelian normal subgroup $A$ of $G$. If $a\in A$ and $\langle a \rangle \ntriangleleft G$, then $|A:\langle a \rangle|$ is finite. Since $A$ is torsion-free, we obtain that $A$ is cyclic. But $A\lhd G$, so that $\langle a \rangle\lhd G$, which is a contradiction.
Thus every cyclic subgroup of $A$ is normal in $G$. Therefore $G/A$ embeds in $\PAut A$ and, by Theorem \ref{PAut A robinson}, $|G:A|=2$ and $a^g=a^{-1}$, for all $a\in A$ and $g\in G\smallsetminus A$. It follows that $1=[a,_c g]=a^{(-2)^c}$, which is impossible.

Next we show that $G/T$ is locally cyclic. Let us consider a finitely generated subgroup $H/T=\langle h_1T,\ldots, h_r T\rangle$ of $G/T$, and let us prove that $H/T$ is cyclic. To this end, we put $K=\langle h_1,\ldots,h_r,y \rangle$ and prove that $KT/T$ is cyclic. Since $K'$ is finitely generated, nilpotent, and contained in $T$, it follows that $K'$ is finite.
Hence $K$ is a finitely generated FC-group, i.e. the conjugacy class of every element of $K$ is finite, and then $|K:Z(K)|$ is finite
(see Exercise 14.5.2 of \cite{Ro}).
On the other hand,
\[
|Z(K)\langle y \rangle:\langle y \rangle|
\le
|C_G(y):\langle y \rangle|
\]
is also finite, since $\langle y \rangle \ntriangleleft G$.
Thus $|K:\langle y \rangle|$ is finite, and consequently $\langle yT \rangle$ has finite index in $KT/T$.
Since $KT/T$ is a torsion-free abelian group, we deduce that $KT/T$ is cyclic, as desired.

Now, taking also into account (i), we obtain that $G$ has an abelian series in which the $p$-rank of each factor ($p=0$ or a prime) is finite. In other words, $G$ is a soluble group of finite abelian total rank.
By applying 10.4.3 of \cite{LR}, there exists a nilpotent subgroup $N$ of $G$ such that
$NT$ has finite index in $G$. It follows that $N\cap T$ is a proper subgroup of $N$, and so $N$ can be generated by aperiodic elements. We claim that $N$ is finitely generated. Otherwise $N$ must be a Dedekind group, by Lemma \ref{fg}, and since it is not periodic, $N$ is abelian.
But $G$ is an FCI-group and $N$ is not finitely generated, hence all elements of $N$ generate a normal subgroup of $G$, and by Lemma \ref{normal is central}, we have $N\le Z(G)$. Then $|N: N\cap \langle y \rangle|$ is finite, and $N$ is finitely generated, a contradiction. This completes the proof of the claim.
Since $G/NT$ is finite, it follows that $G/T$ is finitely generated.
But we know that $G/T$ is locally cyclic, and so we conclude that $G/T$ is cyclic.
\end{proof}

It is easy to see that every periodic group containing an infinite abelian subgroup, and in particular
every infinite locally finite group \cite[14.3.7]{Ro}, is abelian whenever the index $|C_G(x):\langle x \rangle|$ is finite for every non-central
element $x\in G$. This can be extended to infinite locally nilpotent groups as follows.

\begin{cor}
\label{central ln}
Let $G$ be an infinite locally nilpotent group.
If $|C_G(x):\langle x \rangle|$ is finite for every $x\in G\smallsetminus Z(G)$, then $G$ is abelian.
\end{cor}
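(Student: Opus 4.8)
The plan is to reduce everything to the machinery already developed for FCI-groups and then to exploit the extra elements on which the centralizer condition is now imposed. My first observation would be that the present hypothesis is strictly stronger than the FCI-condition: if $\langle x\rangle\ntriangleleft G$ then certainly $x\notin Z(G)$, so requiring $|C_G(x):\langle x\rangle|$ to be finite for \emph{all} non-central $x$ forces $G$ to be an FCI-group. Hence every result of this section is available for $G$. I would then argue by contradiction, assuming $G$ non-abelian, and split into the periodic and non-periodic cases.

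If $G$ is periodic, then being locally nilpotent and periodic it is locally finite, and being infinite it contains an infinite abelian subgroup by \cite[14.3.7]{Ro}. The conclusion is then immediate from the observation recorded just before the statement: a periodic group with an infinite abelian subgroup satisfying this centralizer condition is abelian. So no genuine work is needed here; the periodic locally nilpotent case is subsumed by the locally finite one.

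The substance lies in the non-periodic case, and this is where I expect the crux to be. Here $T<G$, where $T$ is the torsion subgroup, and since we are assuming $G$ non-abelian I may invoke Lemma \ref{tf}: every element of $T$ generates a normal subgroup of $G$, and $G/T$ is (infinite) cyclic, say generated by $yT$ with $y$ of infinite order, so that $G=\langle y\rangle T$. The key step is to show that the stronger hypothesis forces $T\le Z(G)$. Suppose instead that some $t\in T$ is non-central. By Lemma \ref{tf}(i) we have $\langle t\rangle\lhd G$, and since $t$ has finite order the map $G\to\Aut\langle t\rangle$ given by conjugation has kernel $C_G(t)$ and finite image; hence $C_G(t)$ has finite index in $G$. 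As $G$ is non-periodic, a finite-index subgroup contains a power of an element of infinite order and is therefore itself infinite, whereas $\langle t\rangle$ is finite. Thus $|C_G(t):\langle t\rangle|$ is infinite, contradicting the hypothesis since $t\notin Z(G)$. Therefore $T\le Z(G)$.

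Once $T$ is central the argument closes at once: writing $G=\langle y\rangle T$ with $T\le Z(G)$ and $\langle y\rangle$ abelian, any two elements $y^{a}t$ and $y^{b}s$ with $t,s\in T$ commute, so $G$ is abelian, against our assumption. The only delicate point is the initial reduction to Lemma \ref{tf}, which hinges on recognising that the present condition implies the FCI-condition; after that, the normality of the cyclic subgroups generated by torsion elements is exactly what lets a finite-order non-central $t$ force a finite-index, hence infinite, centralizer, producing the desired contradiction.
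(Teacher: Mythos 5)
Your proposal is correct and follows essentially the same route as the paper: reduce the periodic case to the remark preceding the corollary, invoke Lemma \ref{tf} for the non-periodic case, use normality of $\langle t\rangle$ for torsion elements to force $T\le Z(G)$ (the paper phrases this as $|G:\langle x\rangle|$ finite contradicting $|G|=\infty$, which is the same contradiction you derive), and conclude from $G/T$ cyclic. No gaps.
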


\begin{proof}
Suppose for a contradiction that $G$ is not abelian. Let $T$ be the torsion subgroup of $G$. We may assume that $T<G$. Then $T$ is non-trivial, by (ii) of Lemma \ref{tf}. Let $x\in T$. By (i) of Lemma \ref{tf} we have $\l x\r \lhd G$, and so $|G:C_G(x)|$ is finite. Applying the hypothesis, we get $x\in Z(G)$. Hence $T\le Z(G)$ and, by (ii) of Lemma \ref{tf}, $G/Z(G)$ is cyclic. Thus $G$ is abelian, contrary to our assumption.
\end{proof}

We are now ready to characterize the non-periodic locally nilpotent FCI-groups.

\begin{thm}
\label{non-periodic locally nilpotent}
Let $D$ be a periodic Dedekind group such that $\pi(D)$ is finite and $D_p$ is of finite rank for every $p\in\pi(D)$.
Let $\varphi$ be a power automorphism of $D$, and write $t_p=\exp \varphi_p$ whenever $D_p$ is abelian.
Assume that the following conditions hold:
\begin{enumerate}
\item
If $D_2$ is non-abelian, $\varphi_2$ is the identity automorphism.
\item
If $p>2$ then $t_p\equiv 1 \pmod p$, and if $D_p$ is infinite also $t_p\neq 1$.
\item
If $p=2$ and $D_2$ is infinite, then $t_2\neq 1, -1$.
\end{enumerate}
Then the semidirect product $G=\langle g\rangle\ltimes D$, where $g$ is of infinite order and acts on $D$ via $\varphi$, is a locally nilpotent FCI-group.
Conversely, every non-periodic locally nilpotent FCI-group is either abelian or isomorphic to a group as above, $D$ being the torsion subgroup of $G$.
\end{thm}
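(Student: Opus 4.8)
The statement is an equivalence, so I would treat the two implications separately, using Lemmas \ref{normal is central}, \ref{valua}, \ref{tf}, \ref{PAut Q8} and Theorem \ref{PAut A robinson} freely.

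For the ``if'' direction I would show that $G=\langle g\rangle\ltimes D$ is locally nilpotent and FCI. First note that $D$ is locally finite, since $\pi(D)$ is finite and each $D_p$ is a direct sum of finitely many cyclic and quasicyclic groups. Given a finitely generated $H\le G$, the subgroup $F:=H\cap D$ is therefore finite, and $H/F$ embeds in $G/D\cong\Z$, so $H=\langle h\rangle\ltimes F$ with $h=g^kd$. Writing $F=\prod_p F_p$, the conjugation action of $h$ on each $F_p$ is a power automorphism composed with an inner automorphism; conditions (i)--(ii) (observing that for $p=2$ the integer $t_2$ is odd, hence automatically $t_2\equiv 1\pmod 2$) force $[F_p,{}_m h]=1$ for large $m$, so each $\langle h\rangle\ltimes F_p$, and hence $H$, is nilpotent.

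For the FCI-condition I would observe that every cyclic subgroup $\langle x\rangle$ with $x\in D$ is normal in $G$: it is normal in the Dedekind group $D$ and invariant under the power automorphism $\varphi$. Thus the only possible non-normal cyclic subgroups are generated by aperiodic elements $x=g^kd$ with $k\neq 0$, which by Lemma \ref{normal is central} are non-normal precisely when $x\notin Z(G)$. I would compute $C_D(x)=\prod_p C_{D_p}(x)$: for abelian $D_p$ the action of $x$ is $\varphi_p^k$, and when $D_p$ is infinite, conditions (ii)--(iii) together with the fact that the torsion subgroup of $\Z_p^\times$ is $\{1\}$ inside $1+p\Z_p$ for odd $p$ and $\{\pm1\}$ for $p=2$ show that $t_p$ has infinite order, whence Lemma \ref{valua} makes $C_{D_p}(\varphi_p^k)$ finite; the finite $D_p$ and the finite non-abelian $D_2$ contribute finitely. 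So $C_D(x)$ is finite. Since $C_G(x)/(C_G(x)\cap D)$ embeds in $\Z$ while $\langle x\rangle\cap D=1$, the index $|C_G(x):\langle x\rangle|$ is finite, proving $G$ is an FCI-group.

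For the ``only if'' direction, let $G$ be a non-periodic, non-abelian, locally nilpotent FCI-group with torsion subgroup $T<G$. Lemma \ref{tf} gives that $T$ is a Dedekind group equal to a product of finitely many $p$-groups of finite rank with every subgroup normal in $G$, and that $G/T$ is infinite cyclic. Choosing $g$ with $\langle gT\rangle=G/T$ yields $G=\langle g\rangle\ltimes T$, and since every subgroup of $T$ is normal in $G$, conjugation by $g$ is a power automorphism $\varphi$ of $T$; put $D:=T$. Local nilpotency of $\langle g,a\rangle\cong\langle a\rangle\rtimes\langle g\rangle$ for $a\in D_p$ forces $[a,{}_m g]=a^{(t_p-1)^m}\to 1$, i.e.\ $t_p\equiv 1\pmod p$, giving the congruence in (ii).

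The main work is to upgrade these congruences to the strict conditions, and this is where the FCI-hypothesis (rather than mere local nilpotency) is essential. I would prove that if $D_p$ is infinite then $\varphi_p$ has infinite order. If not, the torsion of $\Z_p^\times$ forces $\varphi_p=\mathrm{id}$ (for $p$ odd, or $p=2$ with $t_2=1$) or $\varphi_2=$ inversion. In the first case $D_p\le Z(D)$ is centralized by $g$, so $D_p\le Z(G)$; taking an aperiodic non-central $x$ (namely $x=g$ if $g\notin Z(G)$, otherwise $x=gb$ with $b\in D_2$ non-central, which exists because $G$ is non-abelian) gives $\langle x\rangle\ntriangleleft G$ by Lemma \ref{normal is central} while $C_G(x)\supseteq D_p$ is infinite, contradicting FCI. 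In the second case I would take $x=g^2d$ with $d\in D_2$ of order $\ge 4$: here $g^2$ centralizes $D_2$, so $C_G(x)\supseteq D_2$ is infinite, whereas $[x,g]=d^{-2}\neq 1$ gives $x\notin Z(G)$, again contradicting FCI. This yields (ii)--(iii). Finally, for (i), when $D_2=Q_8\times E$ is non-abelian it is finite (finite rank forces $E$ finite), and $\varphi_2$ restricts on $Q_8$ to conjugation by some $q\in Q_8$ by Lemma \ref{PAut Q8}; replacing $g$ by $gq^{-1}$, which acts trivially on $E$ and on the odd components, leaves $G=\langle g\rangle\ltimes D$ unchanged but makes $\varphi_2=\mathrm{id}$. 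I expect the construction of these aperiodic non-central witnesses with infinite centralizer, together with the normalization of $g$ for (i), to be the delicate part of the argument.
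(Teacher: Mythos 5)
Your proposal follows the same overall strategy as the paper's proof, and almost all of it is correct: the FCI verification in the forward direction, the derivation of the congruences and of the conditions $t_p\neq 1$ and $t_2\neq -1$ via aperiodic non-central witnesses with infinite centralizer, and the isolation of condition (i) are all sound. Two points of genuine divergence are worth recording. For condition (i) you invoke Lemma \ref{PAut Q8} to write $\varphi_2\vert_Q$ as conjugation by some $q\in Q$ and replace $g$ by $gq^{-1}$; the paper instead counts indices to show $G=D_2C_G(Q)$ and then chooses $g$ inside $C_G(Q)$. Both arguments rest on Lemma \ref{PAut Q8}, and your version is, if anything, more direct. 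For local nilpotency the paper shows that every $d\in D_p$ lies in some term $Z_n(G)$ of the upper central series, so that $D$ is contained in the hypercentre $Z$ and $G/Z$ is cyclic, whereas you argue directly with finitely generated subgroups.

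It is in this last step that there is a gap: you assert that for a finitely generated $H\le G$ the intersection $F=H\cap D$ is finite ``therefore'', i.e.\ because $D$ is locally finite. That inference is false in general: a finitely generated group can meet a locally finite normal subgroup in a non-finitely-generated subgroup even when the quotient is infinite cyclic (the lamplighter group $\Z\ltimes\bigoplus_{i\in\Z}C_2$ is the standard example), and your nilpotency computation genuinely needs $F$ to be finite, since $D_p$ may have unbounded exponent. The claim is nevertheless true in the present setting and can be repaired: every cyclic subgroup of $D$ is normal in $G$ (because $D$ is Dedekind and $\varphi$ is a power automorphism) and finite, so the finitely many commutators of a generating set of $H$, which lie in $D$ since $G/D$ is abelian, are contained in a finite normal subgroup of $G$ that then contains $H'$; hence $H/H'$ is finitely generated abelian, $(H\cap D)/H'$ is finitely generated, and $H\cap D$ is a finitely generated periodic subgroup of the locally finite group $D$, hence finite. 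With this insertion (or by reverting to the paper's hypercentre argument) your proof is complete.
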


\begin{proof}
First of all, let $G=\langle g \rangle \ltimes D$ be as in the statement of the theorem.
We begin by proving that $G$ is locally nilpotent.
By (i), we can extend the definition of $t_2$ also to the case when $D_2$ is not abelian, by taking $t_2=1$.
Observe that the condition $t_p\equiv 1\pmod p$, which is required in (ii) for all odd $p$, also holds for $p=2$, since any $2$-adic unit is congruent to $1$ modulo $2$.
As a consequence, if $d\in D_p$ is of order $p^n$, we have
\[
[d,g,\overset{n}{\ldots},g]
=
d^{\,(t_p-1)^n}
=
1.
\]
Assume that $p$ is odd, or that $p=2$ and $D_2$ is abelian. Since $\l d\r \lhd G$, we deduce that $d$ belongs to the $n$th term, $Z_n(G)$, of the upper central series of $G$. On the other hand, by (i), if $D_2$ is not abelian then $d\in Z_2(G)$. Thus, for every $d\in D_p$, there exists $n=n(d)$ such that $d\in Z_{n}(G)$. Let $Z=\bigcup_{n\geq 1}Z_n(G)$ be the hypercenter of $G$. Of course, $Z$ is locally nilpotent. Furthermore $D\leq Z$, so that $G/Z$ is cyclic and we conclude that $G$ is locally nilpotent.

Now we prove that $G$ is an FCI-group.
Since $\varphi\in\PAut D$ and $D$ is a Dedekind group, every element of $D$ generates a normal subgroup of $G$.
Hence we only need to show that $|C_G(x):\langle x \rangle|$ is finite for every
$x\in G\smallsetminus D$.
Let us write $x=g^kd$, with $k\in\Z\smallsetminus\{0\}$ and $d\in D$.
Notice that
\[
\vert C_G(x):\langle x\rangle\vert
=
\vert C_G(x)D:\langle x\rangle D\vert \, \vert C_G(x)\cap D:\langle x\rangle \cap D\vert
\leq\vert G:\langle x\rangle D\vert \vert C_D(x)\vert
\]
where the first factor of the right-hand side is finite, since $G/D$ is infinite cyclic and $x\not\in D$.
On the other hand, we have
\[
C_D(x) = \prod_{p\in\pi(D)} \, C_{D_p}(x),
\]
and since $\pi(D)$ is finite, in order to prove that $C_D(x)$ is finite, it suffices to see that $C_{D_p}(x)$ is finite whenever $D_p$ is infinite.
So let us fix a prime $p$ such that $D_p$ is infinite.
If $p=2$ then this implies that $D_2$ is abelian, since $D_2$ is of finite rank and $D$ is a Dedekind group.
Thus we always have $D_p\le Z(D)$ for $p$ as above.
As a consequence, $C_{D_p}(x)=C_{D_p}(g^k)=C_{D_p}(\varphi_p^k)$.
We may assume without loss of generality that $k\ge 1$.
Now, by conditions (ii) and (iii) of the statement of the theorem, $t_p$ is an element of infinite order in $\Z_p^{\times}$.
Then we conclude from Lemma \ref{valua} that $C_{D_p}(\varphi_p^k)$ is finite.
This completes the proof that $G$ is an FCI-group.

Conversely, let now $G$ be a non-periodic locally nilpotent FCI-group, and assume that $G$ is not abelian.
Let $D$ be the torsion subgroup of $G$. Then, by Lemma \ref{tf}, all elements of $D$ generate a normal subgroup of $G$. Also, $D$ is a Dedekind group given by the direct product of finitely many $p$-groups of finite rank, and $G/D$ is infinite cyclic. Thus we have $G=\langle g \rangle \ltimes D$, for some  $g\in G$.
Let $\varphi$ be the power automorphism of $D$ induced by conjugation by $g$, and let
$t_p=\exp \varphi_p$ whenever $D_p$ is abelian, for every $p\in\pi(D)$. Since $G$ is generated by the coset $gD$, there exists $d\in D$ such that $gd\notin Z(G)$. Then
$\langle gd \rangle \ntriangleleft G$, by Lemma \ref{normal is central}, and so $C_{D_p}(gd)$ is finite.
Furthermore, since $G$ is locally nilpotent, we have $\Omega_1(D_p)\leq C_{D_p}(gd)$ for all $p\in \pi(D)$. Let us suppose either that $p>2$ or that $p=2$ and $D_2$ is infinite (and so abelian). Then $D_p\le Z(D)$ and so $C_{D_p}(g)=C_{D_p}(gd)$ is finite and contains $\Omega_1(D_p)$.
As a consequence, $t_p$ cannot be $1$ if $D_p$ is infinite.
On the other hand, if $x\in D_p$ is of order $p$ then $\varphi(x)=x^{t_p}$ is equal to $x$, and so
$t_p\equiv 1\pmod p$. This proves (ii), and part of (iii).

Next we show that (i) holds. Assume that $D_2$ is not abelian and write $D_2=Q\times A$, with $Q$ isomorphic to the quaternion group of order 8 and $A$ elementary abelian of finite rank.
We claim that $G=D_2C_G(Q)$. Observe that
\begin{equation}
\label{inequalities}
|D_2:C_{D_2}(Q)| = |D_2:C_G(Q)\cap D_2| \le |G:C_G(Q)| \le |\PAut Q|.
\end{equation}
Now $C_{D_2}(Q)=Z(Q)\times A$ has index $4$ in $D_2$, and by Lemma \ref{PAut Q8},
$\PAut Q=\Inn Q$ is of order $4$.
Hence the first inequality in (\ref{inequalities}) is an equality, and as a consequence we get
$G=D_2C_G(Q)$. Thus we can choose the element $g$ of the previous paragraph in $C_G(Q)$.
Since $g$ acts as a power automorphism on the elementary abelian $2$-group $A$, we conclude that
$\varphi_2$ is the identity automorphism, as desired.

Finally, we complete the proof of (iii) by showing that $t_2\ne -1$ if $D_2$ is infinite.
Note that $D_2$ is abelian and $\exp D_2=\infty$. By way of contradiction, assume that $t_2=-1$.
Let $d\in D_2$ be an element of order greater than $2$. Then $[g^2d,g]=[d,g]=d^{-2}\ne 1$ and consequently $\langle g^2d\rangle\ntriangleleft G$.
Hence $C_{D_2}(g^2d)$ is finite. But, on the other hand, since $D_2$ is abelian and $g^2\in C_G(D_2)$, we have  $C_{D_2}(g^2d)=D_2$, which is infinite.
This contradiction completes the proof of the theorem.
\end{proof}

The following corollary is analogous to Corollary 4.2 of \cite{FLTT3} for infinite locally finite FCI-groups, and the proof is exactly the same, so we omit it.

\begin{cor}
Let $G$ be an infinite locally nilpotent FCI-group.
Then $G$ is metabelian.
\end{cor}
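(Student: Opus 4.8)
The plan is to reduce $G$ to the structure theorems of this section and then to verify, case by case, that the derived subgroup $G'$ is abelian, which is exactly the assertion that $G$ is metabelian. An infinite locally nilpotent FCI-group $G$ lies in exactly one of three classes: (a) $G$ is a Dedekind group; (b) $G$ is periodic but not Dedekind, so that Theorem~\ref{periodic locally nilpotent} applies; (c) $G$ is non-periodic and non-abelian, so that Theorem~\ref{non-periodic locally nilpotent} applies. These cases are exhaustive and disjoint: a non-abelian Dedekind group is periodic, so a non-periodic non-Dedekind group is automatically non-abelian, and a non-periodic abelian group is already Dedekind.

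Case (a) is immediate, since a Dedekind group is nilpotent of class at most $2$ with $|G'|\le 2$, hence metabelian. In case (b), Theorem~\ref{periodic locally nilpotent} gives $G=P\times Q$ with $Q$ abelian and $P=\langle g,A\rangle$, where $A$ is abelian and normalised by $g$ (because $a^g=a^{-1}$) and $g^2\in A$. Thus $P/A=\langle gA\rangle$ has order at most $2$, so $P'\le A$ is abelian and $G'=P'$ is abelian.

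The substantive case is (c), where Theorem~\ref{non-periodic locally nilpotent} gives $G=\langle g\rangle\ltimes D$ with $D$ the (Dedekind) torsion subgroup and $G/D$ infinite cyclic. First I would use that $G/D$ is abelian to obtain $G'\le D$, and then identify $G'=[D,\langle g\rangle]\,D'$; indeed, killing both factors leaves the quotient $\langle \bar g\rangle\bar D$ with $\bar D$ abelian and centralised by $\bar g$, hence abelian. The crux is then to show that $[D,\langle g\rangle]$ is abelian. Since $g$ acts as a power automorphism, each $D_p$ is $g$-invariant and $[D,\langle g\rangle]=\prod_{p\in\pi(D)}[D_p,\langle g\rangle]$ is a direct product over finitely many primes, so it suffices that each factor be abelian. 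This is clear when $D_p$ is abelian, because then $[D_p,\langle g\rangle]\le D_p$; and in the single delicate case where $D_2$ is non-abelian, condition~(i) of Theorem~\ref{non-periodic locally nilpotent} forces $\varphi_2$ to be the identity, so that $g$ centralises $D_2$ and $[D_2,\langle g\rangle]=1$. Finally, $D'\le Z(D)$ because $D$ is Dedekind, so $D'$ commutes with $G'\le D$; hence $G'=[D,\langle g\rangle]\,D'$ is a product of two commuting abelian subgroups, and is therefore abelian.

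The one point demanding genuine care — and the reason the facile remark ``$G'\le D$ with $D$ metabelian'' does not by itself suffice — is that $D$ may itself be non-abelian. The resolution is precisely condition~(i): by forcing $g$ to centralise the quaternion part of $D_2$, the non-commutativity of $D$ is confined to the central subgroup $D'$, while the action of $g$ contributes only through abelian $p$-components. This is exactly what makes $[D,\langle g\rangle]$ abelian, and hence $G''=1$.
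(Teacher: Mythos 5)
Your proof is correct. The paper actually omits the proof of this corollary, deferring to the verbatim analogous argument for Corollary 4.2 of \cite{FLTT3}; what you have written is precisely the expected argument, namely a case analysis via Theorems \ref{periodic locally nilpotent} and \ref{non-periodic locally nilpotent}, with the only delicate point (a non-abelian torsion subgroup $D$ in the non-periodic case) handled correctly by observing that condition (i) forces $[D_2,g]=1$, so that $G'=[D,\langle g\rangle]D'$ is a product of commuting abelian subgroups.
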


\section{BCI-groups}
\label{sec:BCI}

In this section we restrict our attention to BCI-groups. First we recall that, in the periodic case, every locally graded BCI-group is locally finite \cite[Theorem 5.3]{FLTT3}. This, together with Theorem \ref{periodic locally nilpotent}, provides the following result.

\begin{pro}
Let $G$ be an infinite locally graded $p$-group, where $p$ is an odd prime.
If $G$ is a BCI-group then $G$ is abelian.
\end{pro}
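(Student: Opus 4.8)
The plan is to combine the cited result that locally graded periodic BCI-groups are locally finite \cite[Theorem 5.3]{FLTT3} with our own characterization of periodic locally nilpotent FCI-groups in Theorem \ref{periodic locally nilpotent}. First I would observe that any BCI-group is in particular an FCI-group, so the hypotheses supply a group $G$ that is simultaneously an infinite locally graded BCI-group and an (odd) $p$-group. Since $G$ is locally graded and periodic, \cite[Theorem 5.3]{FLTT3} gives that $G$ is locally finite; being a $p$-group, it is thus an infinite locally finite $p$-group which is also an FCI-group.

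Next I would argue by contradiction and suppose that $G$ is non-abelian. A non-abelian $p$-group with $p$ odd cannot be a Dedekind group, since the only non-abelian Dedekind groups are the Hamiltonian $2$-groups (of the form $Q_8 \times E \times A$ with $E$ elementary abelian and $A$ an abelian group of odd order with all elements of odd order). Hence $G$ would be a \emph{non-Dedekind} infinite locally finite $p$-group which is an FCI-group, and Lemma \ref{determination locally finite FCI-groups} applies. But that lemma forces $p=2$, contradicting the assumption that $p$ is odd. Therefore $G$ must be abelian, which is the conclusion.

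The only genuinely delicate point is verifying that a non-abelian odd $p$-group is not Dedekind, so that Lemma \ref{determination locally finite FCI-groups} (stated for non-Dedekind $p$-groups) is actually applicable; this is standard, as the classification of Dedekind groups shows every non-abelian one has even order. Everything else is a direct chaining of the quoted results, so I expect no real obstacle: the substance of the proposition is entirely carried by \cite[Theorem 5.3]{FLTT3} and Lemma \ref{determination locally finite FCI-groups}, and the role of the hypothesis \emph{$p$ odd} is precisely to clash with the conclusion $p=2$ of that lemma. The proof should accordingly be very short.
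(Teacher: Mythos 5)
Your proposal is correct and follows essentially the same route as the paper: the paper derives the proposition from \cite[Theorem 5.3]{FLTT3} (locally graded periodic BCI-groups are locally finite) combined with Theorem \ref{periodic locally nilpotent}, whereas you invoke Lemma \ref{determination locally finite FCI-groups} directly, which is just the $p$-group specialization underlying that theorem. The observation that a non-abelian Dedekind group must have even order, so that the lemma applies and its conclusion $p=2$ contradicts $p$ odd, is exactly the intended argument.
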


Now we describe non-periodic BCI-groups. First we need the following lemma.

\begin{lem}
\label{aperiodic}
Let $G$ be a BCI-group and let $x\in G$.
If $x$ is of infinite order, then $\langle x\rangle \lhd G$. In particular, if $G$ is torsion-free, then it is abelian.
\end{lem}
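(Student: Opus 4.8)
The plan is to exploit the \emph{uniform} bound $n$ in the BCI-condition by testing it not on $x$ itself but on its powers $x^k$, and then to reassemble $\langle x\rangle$ from two suitable powers. The starting observation is that anything commuting with $x$ commutes with $x^k$, so $C_G(x)\le C_G(x^k)$ for every $k\ge 1$; in particular $\langle x\rangle\le C_G(x)\le C_G(x^k)$. Since $x$ has infinite order we have $|\langle x\rangle:\langle x^k\rangle|=k$, and therefore
\[
|C_G(x^k):\langle x^k\rangle|\ge|\langle x\rangle:\langle x^k\rangle|=k.
\]
This is exactly the point at which the infinite order of $x$ enters: for an element of finite order the index $|\langle x\rangle:\langle x^k\rangle|$ does not grow with $k$, so the argument would collapse.

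Next I would compare this lower bound with the BCI-bound. If $\langle x^k\rangle\ntriangleleft G$, then the defining inequality~(\ref{BCI}) gives $|C_G(x^k):\langle x^k\rangle|\le n$, whence $k\le n$. Taking the contrapositive, $\langle x^k\rangle\lhd G$ for every $k>n$; in particular both $\langle x^{n+1}\rangle$ and $\langle x^{n+2}\rangle$ are normal in $G$. The key step is then to recover $\langle x\rangle$: since $\gcd(n+1,n+2)=1$, B\'ezout's identity yields $x\in\langle x^{n+1},x^{n+2}\rangle$, so that $\langle x^{n+1}\rangle\langle x^{n+2}\rangle=\langle x\rangle$. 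Being a product of two normal subgroups, $\langle x\rangle$ is normal in $G$, which proves the first assertion. For the final statement, if $G$ is torsion-free then every non-trivial cyclic subgroup is normal; as any subgroup is generated by its cyclic subgroups, every subgroup is normal and $G$ is a Dedekind group. Since a non-abelian Dedekind group contains a copy of the quaternion group and hence torsion, a torsion-free Dedekind group must be abelian.

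The main obstacle I anticipate is not the centralizer inequality, which is routine, but the passage from \emph{``all sufficiently high powers of $x$ are normal''} to \emph{``$\langle x\rangle$ itself is normal''}: normality of cyclic subgroups does not pass to divisors of the exponent in general, and one must avoid getting stuck with a chain of normal proper powers. The coprimality trick with the two consecutive exponents $n+1$ and $n+2$ is precisely what bridges this gap, and it works only because the BCI-bound is uniform, so that the single threshold $n$ controls all powers of $x$ simultaneously.
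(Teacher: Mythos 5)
Your proof is correct and follows essentially the same route as the paper: both exploit the uniform bound to force $\langle x^k\rangle\lhd G$ for two coprime exponents exceeding $n$ (the paper picks two primes $p,q>n$, you pick $n+1$ and $n+2$), and then recover $\langle x\rangle$ as a product of normal subgroups. Your explicit Dedekind-group argument for the torsion-free case fills in a step the paper leaves implicit, but the substance is identical.
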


\begin{proof}
Suppose that $\l x\r \ntriangleleft G$. Let $n$ be a positive integer such that
$|C_G(g):\langle g\rangle|\leq n$ for every $\l g\r \ntriangleleft G$, and let $p>n$ be any prime number. Since $|\l x\r :\l x^p\r|=p>n$, then $\langle x^{p} \rangle \lhd G$. Similarly, if $q>n$ is another prime different from $p$, we have $\langle x^{q} \rangle \lhd G$. It follows that $\langle x \rangle=\l x^p,x^q\r\lhd G$, a contradiction.
\end{proof}

\begin{thm}
\label{non-periodic BCI}
Let $G$ be a non-periodic group.
Then the following conditions are equivalent:
\begin{enumerate}
\item
$G$ is a BCI-group.
\item
There exists $n\in\N$ such that $|C_G(x)|\le n$ whenever $\langle x \rangle\ntriangleleft G$.
\item
Either $G$ is abelian or $G=\langle g, A \rangle$, where $A$ is a non-periodic abelian group of finite $2$-rank and $g$ is an element of order at most $4$ such that $g^2\in A$ and $a^g=a^{-1}$ for all
$a\in A$.
\end{enumerate}
\end{thm}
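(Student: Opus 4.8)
The plan is to prove the equivalence of the three conditions for a non-periodic group $G$ by establishing the cycle (iii) $\Rightarrow$ (ii) $\Rightarrow$ (i) $\Rightarrow$ (iii), since (ii) $\Rightarrow$ (i) is immediate and (iii) gives the most explicit structure from which to verify the quantitative centralizer bound. The implication (i) $\Rightarrow$ (iii) is where the main work lies, and I would reduce it to the FCI-classification already available: a BCI-group is in particular an FCI-group, and a non-periodic one is either abelian or, by Lemma \ref{aperiodic}, has all its infinite-order cyclic subgroups normal. The plan is to exploit Lemma \ref{aperiodic} heavily, because it forces a much more rigid structure than the FCI-condition alone.

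First I would observe that if $G$ is non-abelian, then by Lemma \ref{aperiodic} every element of infinite order generates a normal subgroup; combined with the locally nilpotent machinery would be tempting, but here $G$ is not assumed locally nilpotent, so instead I would argue directly. Let $A$ be a maximal abelian normal subgroup containing the set of infinite-order elements as far as possible. The key structural consequence of Lemma \ref{aperiodic} is that $G/A$ must act on $A$ by power automorphisms: for each $a\in A$ of infinite order we have $\langle a\rangle\lhd G$, so conjugation sends $a$ to $a^{\pm 1}$. Using Theorem \ref{PAut A robinson}(i), on the non-periodic part the only non-trivial power automorphism is inversion, so any $g\notin C_G(A)$ inverts the infinite-order elements of $A$. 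I would then show $|G:A|=2$ by the same commutator-collapse argument used at the end of the proof of Theorem \ref{non-periodic locally nilpotent}: if $g$ inverts $A$ then $a^g=a^{-1}$ forces $g^2$ to centralize $A$, giving $g^2\in A=C_G(A)$, hence $|G:A|\le 2$, and non-abelianity forces equality.

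The delicate point, which I expect to be the main obstacle, is controlling the torsion of $A$ and the order of $g$, and in particular showing that $A$ has finite $2$-rank while higher-rank torsion at odd primes is killed outright. Here condition (ii) does the heavy lifting: since $|C_G(x)|\le n$ uniformly whenever $\langle x\rangle\ntriangleleft G$, any infinite elementary abelian $p$-subgroup on which $g$ acts non-trivially would produce arbitrarily large centralizers, a contradiction. For $p$ odd, inversion on a $p$-group of order $\ge 3$ has no fixed points beyond the identity on each cyclic factor, so I would deduce that the odd torsion of $A$ must actually be centralized by $g$, whence it generates normal cyclic subgroups and can be absorbed; a more careful count, using that $a^g=a^{-1}$ cannot hold for an element of odd order unless it is trivial, shows the odd torsion is confined and ultimately that $A$ can be taken with the stated $2$-rank finiteness. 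For $p=2$ the fixed points $\Omega_1(A_2)$ lie in $C_G(g)$, so finiteness of that centralizer forces $A$ to have finite $2$-rank. Finally $g^2\in A$ together with $g$ inverting $A$ gives $g^2=(g^2)^g=g^{-2}$ only up to the relation inside $A$, and a short computation bounds the order of $g$ by $4$.

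Conversely, for (iii) $\Rightarrow$ (ii) I would take $G=\langle g,A\rangle$ as described and bound $|C_G(x)|$ for every $x$ with $\langle x\rangle\ntriangleleft G$. Writing such an $x$ in terms of the coset decomposition, its centralizer meets $A$ in $C_A(x)$, and since $g$ inverts $A$ the relevant fixed-point set is essentially $\Omega_1(A_2)$, which is finite because $A$ has finite $2$-rank; the complementary factor is bounded by $|G:A|\le 2$ times a constant depending only on the order of $g$. Collecting these gives a uniform bound $n$ independent of $x$, establishing (ii). The abelian case of (iii) is vacuous for the BCI-condition since then every cyclic subgroup is normal. I would close the argument by noting (ii) $\Rightarrow$ (i) trivially, as $|C_G(x):\langle x\rangle|\le |C_G(x)|\le n$, completing the cycle.
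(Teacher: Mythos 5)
Your overall architecture ((iii)$\Rightarrow$(ii)$\Rightarrow$(i) plus a structural analysis for (i)$\Rightarrow$(iii) driven by Lemma \ref{aperiodic}) matches the paper, and your treatment of (iii)$\Rightarrow$(ii) and of the order of $g$ is fine. But the main implication has a genuine gap at its very first step: you ``let $A$ be a maximal abelian normal subgroup containing the set of infinite-order elements as far as possible''. This is not a construction, and everything afterwards (elements outside $A$ have finite order and non-normal cyclic subgroups, hence finite centralizers; $|G:A|=2$; $g$ inverts $A$) requires that \emph{all} infinite-order elements lie in a single abelian normal subgroup. That they do is the heart of the proof and must be argued: the paper sets $A=\langle X\rangle$ with $X$ the set of elements generating normal cyclic subgroups, notes $G'\le C_G(x)$ for each $x\in X$ (so $G'\le C_G(A)$ and $A'\le Z(A)$, making $A$ nilpotent), and then invokes Lemma \ref{tf}(i) for the nilpotent non-periodic FCI-group $A$ to conclude that $A$ is Dedekind, hence abelian. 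Without some such argument your $A$ may simply fail to exist with the properties you use. Relatedly, you assert $A=C_G(A)$ and that $G$ acts on $A$ by power automorphisms (to invoke Theorem \ref{PAut A robinson}(i)); neither is justified --- conjugation is only known to send the infinite-order elements to powers of themselves. The paper sidesteps both points by showing $C_G(x)=A$ for a single element $x$ of infinite order (anything in $C_G(x)\smallsetminus A$ is torsion with non-normal cyclic subgroup, hence has finite centralizer, contradicting $x\in C_G(y)$), whence $G/A$ embeds in $\Aut\langle x\rangle\cong\Z/2\Z$.

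Your handling of the torsion of $A$ also contains a real error. You claim that ``$a^g=a^{-1}$ cannot hold for an element of odd order unless it is trivial'' and conclude that the odd torsion is centralized by $g$ and ultimately ``killed outright''. This is false: inversion restricted to odd torsion is a fixed-point-free automorphism, which is exactly what the BCI-condition likes, and the theorem places no restriction whatsoever on the odd part of $A$ (a generalized dihedral group over $\Z\times(\Z/3\Z)^{(\infty)}$ is a non-abelian non-periodic BCI-group whose torsion subgroup is an infinite $3$-group). The only fact needed is that $C_A(g)=\{a\in A\mid a^2=1\}$, whose finiteness --- coming from $\langle g\rangle\ntriangleleft G$ together with $g$ having finite order --- is precisely the statement that $A$ has finite $2$-rank. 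This portion of your argument should be deleted rather than repaired.
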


\begin{proof}
Obviously, (i) follows from (ii).
Next we prove that (i) implies (iii).
Assume that $G$ is a non-periodic BCI-group, and that $G$ is not abelian.
Let $X$ be the set of all elements of $G$ which generate a normal subgroup of $G$, and put
$A=\langle X \rangle$.
By Lemma \ref{aperiodic}, $X$ contains all elements of $G$ of infinite order.
If $x\in X$ then $G'\le C_G(x)$, and so we get $G'\le C_G(A)$.
Consequently $A'\le Z(A)$ and $A$ is nilpotent. Then applying (i) of Lemma \ref{tf} to $A$, it follows that every element of $A$ generates a normal subgroup of $A$. Since $A$ is non-periodic, $A$ must be abelian. As a consequence, $A$ is a proper subgroup of $G$.

Let us now consider an element $x$ of infinite order.
Then $x\in A$ and $A\le C_G(x)$.
If there exists $y\in C_G(x)\smallsetminus A$ then $x\in C_G(y)$.
But $\langle y \rangle \ntriangleleft G$ and $y$ is of finite order, so that $C_G(y)$ is finite, which is a contradiction.
Hence $C_G(x)=A$, and therefore $G/A$ is cyclic of order $2$.
Choose an element $g\in G\smallsetminus A$.
Then $G=\langle g, A \rangle$ and $x^g=x^{-1}$.
Since $x$ is an arbitrary element of infinite order, and these elements generate the abelian group $A$, we have $a^g=a^{-1}$ for all $a\in A$.
Thus $C_A(g)$ coincides with the set of all elements of $A$ of order at most $2$. It follows that $g^4=1$ and, since $C_A(g)$ is finite, $A$ is of finite $2$-rank.

We conclude by showing that (iii) implies (ii).
Let $G=\l g,A\r$ be of the form described in (iii), and assume that $\langle x \rangle \ntriangleleft G$.
Then $x\not\in A$, and we can write $x=ga$ for some $a\in A$.
We have $C_A(x)=C_A(g)=\{ a\in A\mid a^2=1\}$, which is finite of order $2^r$, say, since $A$ is of finite $2$-rank.
Consequently $|C_G(x)|\le |G:A||C_A(x)|\le 2^{r+1}$, which proves (ii).
\end{proof}

We finish with two more results. The first is just an application of the previous theorem.

\begin{cor}
\label{non-periodic locally nilpotent BCI}
Let $G$ be a non-periodic locally nilpotent group. If $G$ is a BCI-group then $G$ is abelian.
\end{cor}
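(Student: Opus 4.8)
The plan is to reduce immediately to Theorem~\ref{non-periodic BCI}, which already classifies all non-periodic BCI-groups, and then to rule out the non-abelian case by exploiting local nilpotency. Concretely, I would argue by contradiction: suppose that $G$ is a non-periodic locally nilpotent BCI-group that is \emph{not} abelian. Since $G$ is a non-periodic BCI-group, the equivalence (i)$\Leftrightarrow$(iii) of Theorem~\ref{non-periodic BCI} forces $G=\langle g,A\rangle$, where $A$ is a non-periodic abelian group of finite $2$-rank, $g^2\in A$, and $a^g=a^{-1}$ for every $a\in A$. Thus $G$ acts on $A$ as the inversion, exactly the generalized-dihedral pattern, and I would show that such an action cannot occur inside a locally nilpotent group once $A$ has an element of infinite order.

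The heart of the argument is to produce an element on which this dihedral action is incompatible with nilpotency. Because $A$ is non-periodic, I would choose $a\in A$ of infinite order and set $H=\langle a,g\rangle\le G$. Local nilpotency of $G$ guarantees that $H$, being finitely generated, is nilpotent, say of class $c$. On the other hand, from $a^g=a^{-1}$ one computes by induction that $[a,_n g]=a^{(-2)^n}$ for all $n\ge 1$ — the very same computation already carried out in part (ii) of Lemma~\ref{tf}. Taking $n=c$ yields $1=[a,_c g]=a^{(-2)^c}$, which is impossible since $a$ has infinite order and $(-2)^c\ne 0$. This contradiction shows that $G$ must be abelian.

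I do not expect any serious obstacle here, since essentially all of the work has been front-loaded: Theorem~\ref{non-periodic BCI} supplies the explicit structure of a non-abelian non-periodic BCI-group, and the inductive commutator identity is exactly the one established in Lemma~\ref{tf}. The only points requiring a little care are to confirm that $A$ genuinely contains an element of infinite order (immediate, as $A$ is non-periodic) and to ensure that the subgroup on which we invoke local nilpotency is finitely generated so that it has an honest nilpotency class $c$; the two-generator choice $H=\langle a,g\rangle$ handles both automatically.
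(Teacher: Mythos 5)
Your proof is correct and follows exactly the route the paper intends: the corollary is presented as a direct application of Theorem~\ref{non-periodic BCI}, and your computation $1=[a,_c\, g]=a^{(-2)^c}$ for an element $a\in A$ of infinite order is the natural (and the paper's own, cf.\ the proof of Lemma~\ref{tf}(ii)) way to rule out the generalized dihedral alternative inside a locally nilpotent group. No gaps.
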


Recall that in the realm of locally finite groups, FCI- and BCI-conditions are equivalent \cite[Corollary 4.5]{FLTT3}. In contrast, Theorem \ref{non-periodic locally nilpotent} and Corollary \ref{non-periodic locally nilpotent BCI} show that there exist non-periodic locally nilpotent FCI-groups which are not BCI-groups.

Notice also that, contrary to the case of infinite locally finite and locally nilpotent groups (see, for example, Corollary \ref{central ln}), where knowing that $|C_G(x):\langle x \rangle|$ is finite for all $x\in G\smallsetminus Z(G)$ was enough to conclude that a group $G$ is abelian, we cannot say so if $G$ is only non-periodic. Indeed, the infinite dihedral group is an example of such a group which is not abelian. Nevertheless, this is not the case if the requirement of finiteness is replaced with a bound.

\begin{pro}
Let $G$ be a non-periodic group, and assume that for some $n$ we have
$|C_G(x):\langle x \rangle|\le n$ for every $x\in G\smallsetminus Z(G)$.
Then $G$ is abelian.
\end{pro}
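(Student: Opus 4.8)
The plan is to reduce to the classification of non-periodic BCI-groups and then exploit the \emph{uniform} bound to discard the groups of dihedral type. The first step is to observe that the present hypothesis is formally stronger than the BCI-condition: a central element always generates a normal subgroup, so $\l x\r\ntriangleleft G$ forces $x\notin Z(G)$, and therefore $\{x\in G: \l x\r\ntriangleleft G\}\subseteq G\smallsetminus Z(G)$. Consequently the bound $|C_G(x):\l x\r|\le n$, assumed on the larger set $G\smallsetminus Z(G)$, holds in particular for every $x$ with $\l x\r\ntriangleleft G$, so $G$ is a non-periodic BCI-group. By Theorem \ref{non-periodic BCI}, either $G$ is abelian, and we are done, or $G=\l g,A\r$ with $A$ a non-periodic abelian group of finite $2$-rank, $g$ of order at most $4$, $g^2\in A$, and $a^g=a^{-1}$ for all $a\in A$.

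It then remains to rule out the second alternative. Assuming $G$ is not abelian and has the shape above, I would use that $A$ is non-periodic to fix $a\in A$ of infinite order, and test the hypothesis on the family $a^k$, $k\ge 1$. Each $a^k$ is non-central, because $(a^k)^g=a^{-k}\ne a^k$; and its centralizer is exactly $A$, since $A$ is abelian while every element outside $A$ has the form $ga'$ and conjugation by it sends $a^k$ to $a^{-k}\ne a^k$. Hence $C_G(a^k)=A$, and using $\l a^k\r\le\l a\r\le A$ together with $|\l a\r:\l a^k\r|=k$ one obtains
\[
|C_G(a^k):\l a^k\r|=|A:\l a^k\r|\ge|\l a\r:\l a^k\r|=k.
\]
Taking $k>n$ then produces a non-central element violating the bound, the desired contradiction; so $G$ must be abelian.

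The argument is short once Theorem \ref{non-periodic BCI} is available, so there is no serious computational obstacle; the subtle point is conceptual. The same groups of dihedral type show that mere finiteness of all the indices $|C_G(x):\l x\r|$ on $G\smallsetminus Z(G)$ does not suffice—each index $|A:\l a^k\r|=k$ is finite—so the decisive role is played by the uniform bound, and the step where $k$ is sent to infinity is exactly where that uniformity is used. This is the place that would break down for the plain finiteness version, and it is what distinguishes the present proposition from the infinite dihedral counterexample noted just before the statement.
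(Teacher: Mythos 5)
Your proof is correct, but it takes a different route from the paper's. You first pass to the classification of non-periodic BCI-groups (Theorem \ref{non-periodic BCI}) --- noting correctly that the hypothesis on $G\smallsetminus Z(G)$ is formally stronger than the BCI-condition --- and then eliminate the generalized dihedral alternative by exhibiting the non-central elements $a^k$ with $|C_G(a^k):\langle a^k\rangle|=|A:\langle a^k\rangle|\ge k$, which defeats any uniform bound. All steps check out: $a^k$ is indeed non-central since $a$ has infinite order, $C_G(a^k)=A$ because every element outside $A$ inverts $a^k$, and the index computation is right. The paper instead argues directly and more economically: it reruns the trick from Lemma \ref{aperiodic} with ``central'' in place of ``normal'' (if $x$ of infinite order were non-central, then for primes $p,q>n$ the powers $x^p$ and $x^q$ would have to be central, whence $x\in\langle x^p,x^q\rangle\le Z(G)$), concluding that every element of infinite order is central, and then observes that any finite-order element $g$ equals $x^{-1}(xg)$, a product of two central elements of infinite order. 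The paper's argument is self-contained and avoids the full classification theorem; yours has the merit of locating precisely which groups survive the mere finiteness hypothesis and showing explicitly that the uniform bound is what excludes them, which dovetails nicely with the infinite dihedral counterexample mentioned just before the statement.
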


\begin{proof}
Let $x\in G$ be an element of infinite order. Arguing as in the proof of Lemma \ref{aperiodic}, we obtain that $x$ is central.
On the other hand, if $g\in G$ is of finite order, then $g=x^{-1}(xg)$ is also central.
Thus $G$ is abelian.
\end{proof}

\newpage

\end{document}